\begin{document}

\theoremstyle{plain}
\newtheorem{Theorem}{Theorem}
\newtheorem{Lemma}[Theorem]{Lemma}
\newtheorem{Corollary}[Theorem]{Corollary}
\newtheorem{Proposition}[Theorem]{Proposition}

\renewcommand\arraystretch{1.4}

\newcommand{\R}{\mathbb{R}}  
\newcommand{\Z}{\mathbb{Z}}  
\newcommand{\N}{\mathbb{N}}


\title{Some characteristics of the simple Boolean quadric polytope extension}

\author{Andrei Nikolaev}
\thanks {The research was partially supported by the Russian Foundation for Basic Research, Project 14-01-00333, the President of Russian Federation Grant MK-5400.2015.1, and the initiative R\&D VIP-004 YSU}

\address{%
Department of Discrete Analysis, P.G. Demidov Yaroslavl State University, Sovetskaya, 14, Yaroslavl, 150000, Russia
}
\email {andrei.v.nikolaev@gmail.com}

\begin{abstract}
Following the seminal work of Padberg on the Boolean quadric polytope $BQP$ and its LP relaxation $BQP_{LP}$, we consider a natural extension: $SATP$ and $SATP_{LP}$ polytopes, with $BQP_{LP}$ being projection of the $SATP_{LP}$ face (and $BQP$ -- projection of the $SATP$ face). 
We consider a problem of integer recognition: determine whether a maximum of a linear objective function is achieved at an integral vertex of a polytope. 
Various special instances of 3-SAT problem like NAE-3-SAT, 1-in-3-SAT, weighted MAX-3-SAT, and others can be solved by integer recognition over $SATP_{LP}$.
We describe all integral vertices of $SATP_{LP}$. Like $BQP_{LP}$, polytope $SATP_{LP}$ has the Trubin-property being quasi-integral (1-skeleton of $SATP$ is a subset of 1-skeleton of $SATP_{LP}$). However, unlike $BQP$, not all vertices of $SATP$ are pairwise adjacent, the diameter of $SATP$ equals 2, and the clique number of 1-skeleton is superpolynomial in dimension.
It is known that the fractional vertices of $BQP_{LP}$ are half-integer (0, 1 or 1/2 valued). We show that the denominators of $SATP_{LP}$ fractional vertices can take any integral value. 
Finally, we describe polynomially solvable subproblems of integer recognition over $SATP_{LP}$ with constrained objective functions.
Based on that, we solve some cases of edge constrained bipartite graph coloring.
\end {abstract}

\keywords{LP relaxation, 1-skeleton, fractional vertices, integer recognition, polynomially solvable subprobems}

\maketitle


\section{Boolean quadric polytope and its relaxations}

We consider the well-known Boolean quadric polytope $BQP(n)$ \cite {Padberg}, satisfying the constraints
\begin{gather}
x_{i} + x_{j} - x_{i,j} \leq 1, \label {BQPfirst}\\
x_{i,j} \leq x_{i},\\
x_{i,j} \leq x_{j},\\
x_{i,j} \geq 0, \label{BQPlast}\\
x_{i}, x_{i,j} \in \{0,1\}, \label {BQPintegral}
\end {gather}
for all $i,j: \ 1\leq i < j \leq n$.

Polytope $BQP(n)$ is constructed from the NP-hard problem of unconstrained Boolean quadratic programming:
$$Q(x) = x^{T} Q x \rightarrow \max,$$
where vector $x\in \{0,1\}^{n}$, and $Q$ is an upper triangular matrix, by introducing new variables $x_{i,j} = x_{i} x_{j}$.

Boolean quadric polytope arises in many fields of mathematics and physics. Sometimes it is called the correlation polytope, since its members can be interpreted as joint correlations of events in some probability space. Also within the quantum mechanics Boolean quadric polytope is connected with the representability problem for density matrices of order $2$ that render physical properties of a system of particles \cite {Deza-Laurent}. 
Besides, $BQP(n)$ is in one-to-one correspondence via the covariance linear mapping with the well-known cut polytope $CUT(n+1)$ of the complete graph on $n+1$ vertices \cite {De Simone} (see also \cite {Barahona-Mahjoub}).

In recent years, the Boolean quadric polytope has been under the close attention in connection with the problem of estimating the extension complexity \cite {Maksimenko}. An extension of the polytope $P$ is another polytope $Q$ such that $P$ is the image of $Q$ under a linear map. 
The number of facets of $Q$ is called the size of an extension. Extension complexity of $P$ is defined as the minimum size of all possible extensions. Fiorini et al. proved that the extension complexity of the Boolean quadric polytope is exponential \cite {Fiorini} (see also \cite {Kaibel}).

\begin {Theorem}
The extension complexity of $BQP(n)$ and $CUT(n)$ is $2^{\Omega (n)}$.
\end {Theorem}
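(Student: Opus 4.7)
The plan is to invoke Yannakakis' factorization theorem: for a polytope $P$ with slack matrix $S_P$ (rows indexed by vertices, columns by facet-defining inequalities, entries given by slacks), the extension complexity $\mathrm{xc}(P)$ equals the nonnegative rank $\mathrm{rk}_+(S_P)$. Hence it suffices to exhibit a submatrix of the slack matrix of $BQP(n)$ (or of an affinely isomorphic polytope) whose nonnegative rank is $2^{\Omega(n)}$.

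First I would pass to the correlation polytope $COR(n) = \mathrm{conv}\{bb^{T} : b\in\{0,1\}^n\}$, which is affinely isomorphic to $BQP(n)$, so extension complexities agree. For every $a\in\{0,1\}^n$, the inequality $(1-\langle a,x\rangle)^{2}\geq 0$ is valid on $COR(n)$ after linearization, and its slack on the vertex $bb^{T}$ equals $(1-\langle a,b\rangle)^{2}$. Collecting these gives a nonnegative matrix $M[a,b]=(1-\langle a,b\rangle)^{2}$ whose nonnegative rank bounds $\mathrm{xc}(COR(n))$ from below (each row of $M$ is a nonnegative combination of rows of $S_{COR(n)}$).

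The heart of the argument is showing $\mathrm{rk}_{+}(M)=2^{\Omega(n)}$. Observe that $M[a,b]=0$ exactly when $|a\cap b|=1$ and $M[a,b]\geq 1$ otherwise, so $M$ encodes the \emph{unique disjointness} predicate. I would use the link between nonnegative rank and randomized communication complexity: a nonnegative factorization $M=UV$ of inner dimension $r$ yields a randomized protocol for the (partial) unique disjointness function with cost $O(\log r)$, and Razborov's $\Omega(n)$ lower bound for the communication complexity of disjointness then forces $\log_{2}\mathrm{rk}_{+}(M)=\Omega(n)$. Alternatively, one can use the direct rectangle-covering / corruption-bound approach to avoid an explicit protocol construction, exploiting that "0" entries of $M$ form a structured set on which large monochromatic rectangles cannot exist.

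Finally, the bound transfers to $CUT(n)$ through the covariance mapping, which is an affine bijection between $BQP(n-1)$ and $CUT(n)$, so $\mathrm{xc}(CUT(n))=\mathrm{xc}(BQP(n-1))=2^{\Omega(n)}$. The main obstacle is the lower bound on $\mathrm{rk}_{+}(M)$: one must carefully relate nonnegative factorizations to communication protocols (or to rectangle covers of the support pattern), and then apply a nontrivial combinatorial lower bound on unique disjointness; the affine reductions among $BQP$, $COR$, and $CUT$ are standard and contribute no real difficulty.
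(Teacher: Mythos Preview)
The paper does not supply its own proof of this theorem; it is quoted as a known result of Fiorini, Massar, Pokutta, Tiwary, and de Wolf \cite{Fiorini} (with a pointer to the shorter argument of Kaibel and Weltge \cite{Kaibel}). Your outline is precisely the Fiorini et al.\ strategy---Yannakakis' factorization theorem, the passage to the correlation polytope, the slack matrix $M[a,b]=(1-\langle a,b\rangle)^{2}$, the exponential lower bound on $\mathrm{rk}_{+}(M)$ via unique disjointness, and the covariance map to $CUT$---so your proposal agrees with the source the paper cites.
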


Since polytopes of many combinatorial optimization problems, including stable set, knapsack, $3$-dimensional matching, and traveling salesman, contain a face that is an extension of $BQP(n)$, those polytopes also have an exponential extension complexity. Thus, corresponding problems can not be solved effectively by linear programming, as any LP formulation will have an exponential number of inequalities.

If we exclude from the system (\ref{BQPfirst})-(\ref{BQPintegral}) the constraints (\ref{BQPintegral}) that the variables are integral, the remaining system (\ref{BQPfirst})-(\ref{BQPlast}) describes the Boolean quadric relaxation polytope $BQP_{LP}(n)$. Corresponding cut polytope relaxation is known as the rooted semimetric polytope $RMET(n)$. 

When we add the slack variables
\begin{gather*}
x^{1,1}_{i,j} = x_{i,j},\ \ \ \ x^{2,2}_{i,i} = 1 - x_{i,i},\\
x^{1,2}_{i,j} = x_{j,j} - x_{i,j},\ \ \ x^{2,1}_{i,j} = x_{i,i} - x_{i,j},\\
x^{2,2}_{i,j} = 1 - x_{i,i} - x_{j,j} + x_{i,j},
\end {gather*}
$BQP_{LP}(n)$ can be written in the standard form
\begin{gather}
x^{1,1}_{i,j}+x^{1,2}_{i,j}+x^{2,1}_{i,j}+x^{2,2}_{i,j} = 1, \label {BQP_standard_first} \\
x^{1,1}_{i,j}+x^{1,2}_{i,j} = x^{1,1}_{k,j}+x^{1,2}_{k,j}, \\
x^{1,1}_{i,j}+x^{2,1}_{i,j} = x^{1,1}_{i,l}+x^{2,1}_{i,l}, \\
x^{1,2}_{i,i}=x^{2,1}_{i,i}=0, \\
x^{1,1}_{i,j}\geq 0, \ x^{1,2}_{i,j}\geq 0, \ x^{2,1}_{i,j}\geq 0, \ x^{2,2}_{i,j}\geq 0, \label {BQP_standard_last}
\end{gather}
where $1 \leq k \leq i \leq j \leq l \leq n$ \cite {Bondarenko-Uryvaev}.

Points of the $BQP_{LP}(n)$ polytope in the form (\ref{BQP_standard_first})-(\ref{BQP_standard_last}) can be conveniently represented as a block upper triangular matrix (Table \ref{BQP_LP_block}).
\begin{table}[h]
\centering
\begin {tabular} {||c|c||c|c||}
\hhline {|t:==:t:==t:|}
$x^{1,1}_{i,i}$ & $0$ & $x^{1,1}_{i,j}$ & $x^{1,2}_{i,j}$ \\ \hhline {||--||--||}
$0$ & $x^{2,2}_{i,i}$ & $x^{2,1}_{i,j}$ & $x^{2,2}_{i,j}$ \\ \hhline {|b:==::==:|}
\multicolumn{2}{c||}{} & $x^{1,1}_{j,j}$ & $0$  \\ \hhline {~~||--||}
\multicolumn{2}{c||}{} & $0$ & $x^{2,2}_{j,j}$  \\ \hhline {~~|b:==:b|}
\end {tabular}
\caption{Fragment of the $BQP_{LP}(n)$ block matrix.}
\label {BQP_LP_block}
\end {table}

The relaxation polytope $BQP_{LP}(n)$ and the Boolean quadric polytope $BQP(n)$ have the same integral vertices. Hence, the Boolean quadratic programming and max-cut are reduced to integer programming over $BQP_{LP}(n)$.

\begin {Theorem}
Integer programming over $BQP_{LP}(n)$ is NP-hard.
\end {Theorem}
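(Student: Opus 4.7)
The plan is to reduce unconstrained Boolean quadratic programming, already noted to be NP-hard, to integer programming over $BQP_{LP}(n)$. The key observation, stated just before the theorem, is that $BQP_{LP}(n)$ and $BQP(n)$ share exactly the same integral vertices, namely the points of the form $(x_i, x_i x_j)$ for $x \in \{0,1\}^n$.

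First I would take an arbitrary instance $Q(x) = x^T Q x$ of Boolean quadratic programming with upper-triangular matrix $Q$ and rewrite the objective in the lifted variables: set $c_i := q_{ii}$ and $c_{ij} := q_{ij}$ for $i<j$, so that on any $\{0,1\}$-vector $x$ we have
\begin{equation*}
x^T Q x = \sum_{i} c_i x_i + \sum_{i<j} c_{ij} x_i x_j = \sum_{i} c_i x_i + \sum_{i<j} c_{ij} x_{i,j},
\end{equation*}
using the substitution $x_{i,j} = x_i x_j$ that defines $BQP(n)$. This linear function in the lifted variables agrees with $Q(x)$ on every integral vertex of $BQP_{LP}(n)$.

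Next I would observe that maximizing this linear form over the integer points of $BQP_{LP}(n)$ is precisely the same as maximizing it over the vertices of $BQP(n)$, which in turn equals $\max\{Q(x) : x \in \{0,1\}^n\}$. The reduction is clearly polynomial (the coefficients $c_i, c_{ij}$ are read directly from $Q$, and the number of lifted variables is $\binom{n+1}{2}$). Since the source problem is NP-hard, so is integer programming over $BQP_{LP}(n)$.

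I do not anticipate a real obstacle: the proof is just a one-line Karp reduction that records the standard fact that $BQP(n)$ linearizes unconstrained quadratic 0/1 optimization, combined with the already-stated equality of integral vertex sets of $BQP(n)$ and $BQP_{LP}(n)$. If anything, the only point worth stating explicitly is that NP-hardness of integer programming here refers to optimizing a linear objective over the integral points of $BQP_{LP}(n)$, since otherwise the claim could be misread as referring to mere feasibility, which is trivial because every $0/1$ choice of the $x_i$ extends to an integral feasible point.
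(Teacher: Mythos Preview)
Your proposal is correct and follows exactly the route the paper indicates: the paper does not give a formal proof of this theorem but simply states it as a consequence of the preceding sentence, ``Hence, the Boolean quadratic programming and max-cut are reduced to integer programming over $BQP_{LP}(n)$.'' Your write-up is a faithful expansion of that one-line reduction, and the clarifying remark about ``integer programming'' meaning linear optimization over the integral points (rather than feasibility) is a worthwhile addition.
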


We consider a problem of integer recognition: for a given linear objective function $f(x)$ and a polytope $P$ determine whether $\max \{f(x)\ | \ x \in P\}$ is achieved at an integral vertex of $P$. It is similar to the integer feasibility problem and NP-complete in general case. In \cite {Bondarenko-Uryvaev} integer recognition over $BQP_{LP}(n)$ was solved by linear programming over $BQP_{LP}(n)$ and the metric polytope $MET(n)$, obtained by augmenting the system (\ref{BQP_standard_first})-(\ref{BQP_standard_last}) by the triangle inequalities that define the facets of $BQP(3)$ \cite {Padberg}:
\begin{gather*}
x_{i,i} + x_{j,j} + x_{k,k} - x_{i,j} - x_{i,k} - x_{j,k} \leq 1,\\
-x_{i,i} + x_{i,j} + x_{i,k} - x_{j,k} \leq 0,\\
-x_{j,j} + x_{i,j} - x_{i,k} + x_{j,k} \leq 0,\\
-x_{k,k} - x_{i,j} + x_{i,k} + x_{j,k} \leq 0,
\end {gather*}
for all $i,j,k$, where $1 \leq i < j < k \leq n$.

\begin {Lemma}
(see \cite {Bondarenko-Uryvaev}) If for some linear objective function $f(x)$ we have 
$$\max_{x\in BQP_{LP}(n)} f(x) = \max_{x\in MET(n)} f(x),$$
then the maximum is achieved at an integral vertex of $BQP_{LP}(n)$. Otherwise,
$$\max_{x\in BQP_{LP}(n)} f(x) > \max_{x\in MET(n)} f(x),$$
and the function $f(x)$ has a maximum value at the face containing only fractional vertices.
\label {lemma_cut_BQPLP}
\end {Lemma}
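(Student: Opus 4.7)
The plan is to prove the biconditional in two cases, leveraging the inclusion chain $BQP(n)\subseteq MET(n)\subseteq BQP_{LP}(n)$: the second inclusion holds because $MET(n)$ is $BQP_{LP}(n)$ augmented by the triangle inequalities, and the first because those inequalities are facets of $BQP(3)$ and hence valid for $BQP(n)$. Consequently all three polytopes share exactly the same set of integer vertices, namely $V(BQP(n))$. The main external input I intend to use is Padberg's classical theorem that every fractional vertex of $BQP_{LP}(n)$ is half-integer and strictly violates at least one of the triangle inequalities, so every fractional vertex of $BQP_{LP}(n)$ lies strictly outside $MET(n)$.

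For the strict-inequality case, assume $\max_{BQP_{LP}} f>\max_{MET} f$. Every integer vertex of $BQP_{LP}(n)$ lies in $MET(n)$, so its $f$-value is at most $\max_{MET} f$, strictly less than $\max_{BQP_{LP}} f$. Hence no integer vertex is optimal for $BQP_{LP}(n)$, and the optimal face of $BQP_{LP}(n)$ contains only fractional vertices, as required.

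For the equality case, assume $\max_{BQP_{LP}} f=\max_{MET} f=M$. Pick any optimal vertex $v^*$ of $BQP_{LP}(n)$; if $v^*$ is integer we are done. Otherwise $v^*$ is fractional, and by Padberg's theorem $v^*\notin MET(n)$. I would then combine Padberg's theorem with the quasi-integrality (Trubin property) of $BQP_{LP}(n)$ referenced in the abstract---every edge of $BQP(n)$ is an edge of $BQP_{LP}(n)$---and with an LP-duality/complementary-slackness analysis exploiting the standard-form block representation of $BQP_{LP}(n)$ in Table~\ref{BQP_LP_block}, to exhibit an integer vertex of $BQP_{LP}(n)$ at which $f$ still attains $M$.

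The main obstacle is this last step. Padberg's theorem rules out a single fractional optimal vertex from $MET(n)$, but it does not by itself exclude a purely fractional optimal face of $BQP_{LP}(n)$ whose relative interior nonetheless meets $MET(n)$ via a convex combination of fractional vertices (different vertices can violate different triangle inequalities, and these violations can cancel in the convex combination). Ruling out this pathology, and thereby promoting an $MET$-optimum to a genuine integer vertex of $BQP_{LP}(n)$, is where the proof must invoke the quasi-integrality of $BQP_{LP}(n)$ or an equivalent facet-based duality argument tying the slack structure at a tight $MET$-vertex back to integrality.
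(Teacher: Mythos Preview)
The paper does not actually prove this lemma; it is quoted from \cite{Bondarenko-Uryvaev} and used later as a template (see the end of the proof of Theorem~\ref{integer_recognition_SATP_polynomial}). So there is no ``paper's own proof'' to compare against, and your proposal must be judged on whether it stands on its own.

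Your treatment of the strict-inequality direction is correct and complete: since every integral vertex of $BQP_{LP}(n)$ lies in $MET(n)$, a strict gap forces the optimal face of $BQP_{LP}(n)$ to avoid all integral vertices.

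The equality direction, however, is not proved---and you say so yourself. You correctly isolate the real difficulty: Padberg's result that each fractional (half-integer) vertex of $BQP_{LP}(n)$ violates some triangle inequality does \emph{not} preclude a purely fractional optimal face whose relative interior meets $MET(n)$ (different vertices may violate different triangles, and a convex combination can satisfy them all). Your proposed remedies do not close this gap. Quasi-integrality only tells you that edges of $BQP(n)$ survive in $BQP_{LP}(n)$; it says nothing about whether a given face of $BQP_{LP}(n)$ must contain an integral vertex. Invoking ``LP-duality/complementary-slackness'' here is a placeholder, not an argument: you would need to exhibit concretely which dual constraints force integrality, and nothing in the block representation of Table~\ref{BQP_LP_block} does this automatically.

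The argument that actually works---and the one whose shape you can see mirrored in the paper's proof of Theorem~\ref{integer_recognition_SATP_polynomial}---is constructive: starting from an optimal point $w\in MET(n)$ (hence optimal in $BQP_{LP}(n)$), one exploits the explicit $\{0,\tfrac12,1\}$ structure of $BQP_{LP}$-vertices to perform a rounding/perturbation that preserves the objective value and produces an integral vertex $q$ with $f(q)=f(w)$. The triangle inequalities are precisely what guarantee that the required perturbations are feasible. Your proposal gestures toward the right ingredients but does not supply this construction, so as written it has a genuine gap in the equality case.
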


Hence, we have

\begin {Theorem}
Integer recognition over $BQP_{LP}(n)$ is polynomially solvable.
\label {theorem_BQPLP_polynomial}
\end {Theorem}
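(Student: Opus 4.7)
The proof strategy is to reduce integer recognition to a pair of linear programs and invoke Lemma~\ref{lemma_cut_BQPLP}. Specifically, given a linear objective $f(x)$, I would compute $M_1 := \max_{x \in BQP_{LP}(n)} f(x)$ and $M_2 := \max_{x \in MET(n)} f(x)$, and declare that the maximum of $f$ over $BQP_{LP}(n)$ is attained at an integral vertex if and only if $M_1 = M_2$. By the cited lemma this criterion is both necessary and sufficient, so correctness is immediate from the previous result.

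The remaining task is to verify that the running time is polynomial in $n$ and in the bit-length of $f$. The system (\ref{BQP_standard_first})--(\ref{BQP_standard_last}) describing $BQP_{LP}(n)$ uses $O(n^2)$ variables and $O(n^3)$ linear constraints. The polytope $MET(n)$ is obtained from this description by adjoining the four triangle inequalities for each triple $\{i,j,k\}\subseteq\{1,\dots,n\}$, contributing an additional $O(n^3)$ constraints. Both systems therefore admit explicit half-space descriptions of polynomial size in $n$, and so $M_1$ and $M_2$ can be computed by any polynomial-time linear programming algorithm (for instance the ellipsoid method or an interior-point method). The final comparison $M_1 \stackrel{?}{=} M_2$ is performed between rationals of bit-size polynomial in the input, hence in polynomial time with exact arithmetic.

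Because essentially all of the mathematical content has already been packaged into Lemma~\ref{lemma_cut_BQPLP}, there is no substantial obstacle: the theorem is a direct algorithmic corollary of that lemma. The only point that requires any care is to observe that $MET(n)$, despite being defined by the triangle inequalities of all triples, admits a compact explicit description of polynomial size, so that one does not need a separation oracle or any nontrivial combinatorial machinery. With this observation in hand, the argument reduces to two polynomial-time LP calls followed by a single rational comparison.
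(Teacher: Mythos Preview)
Your proposal is correct and matches the paper's approach exactly: the paper states Theorem~\ref{theorem_BQPLP_polynomial} immediately after Lemma~\ref{lemma_cut_BQPLP} with only the words ``Hence, we have,'' treating it as a direct corollary. Your write-up simply spells out the obvious algorithmic content (two polynomial-size LPs and a comparison), which is precisely what the paper leaves implicit.
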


Metric polytope $MET(n)$ itself is also important, since it is the most simple and natural relaxation of the $CUT(n)$ polytope, and has many practical applications, such as being a compact LP formulation for the max-cut problem on graphs not contractible to $K_{5}$ \cite {Barahona}. Integer recognition over metric polytope is examined in \cite {Bondarenko-NSS}.

Note that integer programming and integer recognition problems over polytope $BQP_{LP}(n)$ differ greatly in their complexity.

For any polytope $P$, we call the collection of its vertices (0-faces) and its edges (1-faces) the 1-skeleton of $P$. 
Let $Q$ be a polytope that is contained in $P$. We say that $P$ has the Trubin-property (with respect to $Q$) if the 1-skeleton of $Q$ is a subset of the 1-skeleton of $P$ \cite {Padberg}. Polytope $P$ with this property is also called quasi-integral. If $P$ has the Trubin property, then all vertices of $Q$ are vertices of $P$ and those facets of $Q$ that define invalid inequalities for $P$ do not create any new adjacencies among the vertices of $Q$.

\begin {Theorem} (see \cite {Padberg})
The diameter of $BQP(n)$ equals $1$. Both relaxations $BQP_{LP}(n)$ and $MET(n)$ have the Trubin-property with respect to $BQP(n)$.
\end {Theorem}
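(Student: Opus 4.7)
The statement combines three claims, which I plan to dispatch together via a single reduction. The key observation is: if for every two distinct integral vertices $u = \chi^S$, $v = \chi^T$ of $BQP(n)$ the segment $[u, v]$ is a $1$-dimensional face of $BQP_{LP}(n)$, then all three claims follow. For any polytope $Q$ with $BQP(n) \subseteq Q \subseteq BQP_{LP}(n)$, a linear functional $c$ whose maximum on $BQP_{LP}(n)$ is attained only on $[u,v]$ also attains its maximum on $Q$ only on $[u, v]$: the face of $Q$ defined by $c$ sits inside the corresponding face $[u,v]$ of $BQP_{LP}(n)$ and contains both $u$ and $v$, so it equals $[u,v]$. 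Taking $Q = BQP(n)$ gives adjacency of $u$ and $v$ in $BQP(n)$, and hence diameter $1$; $Q = MET(n)$ gives its Trubin property; and $Q = BQP_{LP}(n)$ is the remaining Trubin property.

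To prove $[u,v]$ is a face of $BQP_{LP}(n)$, I would work directly with the constraint system (\ref{BQPfirst})--(\ref{BQPlast}). For each pair $i < j$, the four inequalities on $(x_i, x_j, x_{i,j})$ define a copy of $BQP_{LP}(2) = BQP(2)$, a tetrahedron whose four $0$--$1$ vertices each meet exactly three of the four facet inequalities. The tight inequalities at $u$ and $v$ on pair $(i,j)$ are thus determined by the patterns $(u_i, u_j), (v_i, v_j) \in \{0,1\}^2$, and the number of inequalities tight at both $u$ and $v$ on this pair is three when these patterns agree and exactly two otherwise. I would then check that the ansatz $x_i(t) := (1-t)\,u_i + t\,v_i$ and $x_{i,j}(t) := (1-t)\,u_i u_j + t\,v_i v_j$ satisfies every common tight inequality as an equality for all $t \in \mathbb{R}$, while the remaining non-common-tight inequalities restrict the range to $t \in [0, 1]$. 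Hence the face cut out by the common tight inequalities is exactly $[u, v]$, making it an edge of $BQP_{LP}(n)$.

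The main obstacle is the pair-by-pair bookkeeping across the $16$ patterns of $(u_i, u_j, v_i, v_j)$. The tightest cases occur when both $i$ and $j$ lie in $S \triangle T$: the subcase $(1, 1, 0, 0)$ forces $x_i = x_j = x_{i,j}$, and $(1, 0, 0, 1)$ forces $x_i + x_j = 1$ and $x_{i,j} = 0$. One must check that these conditions fit coherently with the one-parameter ansatz across all pairs simultaneously, since the $x_i$ for different $i \in S \triangle T$ are coupled through multiple pair-constraints; beyond this enumeration, no deeper structural input is needed.
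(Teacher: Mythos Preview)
The paper does not supply its own proof of this theorem: it is stated with a citation to Padberg and left unproved. So there is nothing in the paper to compare your argument against directly.

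That said, your plan is sound and would yield a complete, self-contained proof. The sandwich reduction is correct: once $[u,v]$ is shown to be an edge of $BQP_{LP}(n)$, any intermediate polytope $Q$ with $BQP(n)\subseteq Q\subseteq BQP_{LP}(n)$ inherits $[u,v]$ as a face via the supporting functional, and since $u,v\in Q$ and $Q$ is convex this face equals $[u,v]$. This simultaneously gives adjacency in $BQP(n)$ (hence diameter~$1$) and the Trubin property for both $BQP_{LP}(n)$ and $MET(n)$.

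The edge claim itself also goes through. For each pair $(i,j)$ your count of common tight constraints is correct, and the resulting affine system does collapse to a single parameter: indices outside $S\triangle T$ are frozen; for $i,i'\in S\setminus T$ the pair $(i,i')$ forces $x_i=x_{i'}$; for $i\in S\setminus T$, $j\in T\setminus S$ the pair $(i,j)$ forces $x_i+x_j=1$; and each $x_{i,j}$ is then determined. Two small remarks. First, the ``$16$ patterns'' reduce by symmetry to about five genuinely distinct cases, so the bookkeeping is lighter than you suggest. Second, you should note explicitly that when $|S\triangle T|=1$ there are no intra-$(S\triangle T)$ pairs, but the single free $x_i$ still gives a one-parameter family via the pairs linking $i$ to $S\cap T$ and to $[n]\setminus(S\cup T)$; this edge case is easy but worth a sentence.
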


As for fractional vertices the properties of $BQP_{LP}(n)$ and $MET(n)$ are completely different.

\begin {Theorem} (see \cite {Padberg})
Every vertex of $BQP_{LP}(n)$ is $\{0,\frac{1}{2},1\}$ valued. 
\end {Theorem}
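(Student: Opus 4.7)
The plan is to apply the classical perturbation argument at a vertex of an LP relaxation. Assume for contradiction that $v = (v_i, v_{i,j})$ is a vertex of $BQP_{LP}(n)$ with at least one coordinate not in $\{0, 1/2, 1\}$. I will exhibit a nonzero direction $d \in \{-1, 0, +1\}^{n + \binom{n}{2}}$ such that $v \pm \varepsilon d \in BQP_{LP}(n)$ for all sufficiently small $\varepsilon > 0$; since $v = \tfrac{1}{2}((v+\varepsilon d)+(v-\varepsilon d))$ would then be the midpoint of two distinct feasible points, this contradicts the extremality of $v$.

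Let $F$ denote the fractional support of $v$, that is, the set of coordinates (indices $i$ or pairs $(i,j)$) whose value under $v$ lies in $(0,1) \setminus \{1/2\}$. By hypothesis $F \neq \emptyset$, and I would take $d$ supported entirely on $F$. Any active non-negativity or implied upper-bound constraint forces its coordinate to be integral and hence outside $F$, so such constraints are automatically respected. The verification reduces to the non-boundary active constraints: the coupling equalities $x_{i,j} = x_i$, $x_{i,j} = x_j$, and the triangle equalities $x_i + x_j - x_{i,j} = 1$.

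For an active coupling constraint, $v_{i,j} = v_i$ places both coordinates in $F$ or both outside, and the tangent condition $d_{i,j} = d_i$ is trivially satisfiable. The delicate case is an active triangle constraint, where the tangent condition $d_i + d_j - d_{i,j} = 0$ must be reconciled with the positions of $v_i, v_j, v_{i,j}$ relative to $1/2$ and with the bound $v_{i,j} \leq \min(v_i, v_j)$. A short case analysis on how many of $v_i, v_j, v_{i,j}$ are non-half-integral and on which side of $1/2$ each of them lies shows that a consistent assignment of $\pm 1$ signs for the $F$-entries is always available locally.

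The main obstacle is assembling these local sign choices into one globally consistent $d \neq 0$ respecting every active constraint simultaneously. I would frame this as finding a nontrivial vector in the null space of the active-constraint coefficient matrix restricted to the columns indexed by $F$, and conclude by a rank-count argument that exploits the sparse structure of the triangle and coupling constraints (each involves at most three coordinates with $\pm 1$ entries) together with the fact that no row supported entirely inside $F$ can by itself pin down a half-integer value. This rank deficiency yields the required direction $d$ and closes the contradiction.
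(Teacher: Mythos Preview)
The paper does not prove this theorem at all; it is quoted from Padberg with the annotation ``(see \cite{Padberg})'' and no argument is given. So there is no paper proof to compare against, and I can only assess your proposal on its own merits.

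Your overall strategy---assume a non--half-integral coordinate exists and exhibit a nonzero feasible direction $d$---is the standard way to attack such statements. However, the proposal has a genuine gap precisely at the step you yourself label ``the main obstacle.'' You reduce the problem to finding a nontrivial vector in the null space of the active-constraint matrix restricted to the columns indexed by $F$, and then write that you would ``conclude by a rank-count argument.'' But that rank-count argument \emph{is} the entire proof, and you have not carried it out. The sentence ``no row supported entirely inside $F$ can by itself pin down a half-integer value'' is not a rank argument: a system with $\pm 1$ coefficients and right-hand sides in $\{0,1\}$ can perfectly well have a unique solution with entries in $(0,1)\setminus\{1/2\}$ (for instance $x_1+x_2+x_3=1$, $x_1-x_2=0$, $x_1-x_3=0$ forces $x_1=x_2=x_3=1/3$). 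What saves $BQP_{LP}(n)$ is the very particular incidence structure of its constraints (each row touches exactly one pair variable $x_{i,j}$ and at most two node variables $x_i,x_j$), and you have not exploited that structure in any concrete way.

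Two further remarks. First, insisting that $d\in\{-1,0,+1\}^{N}$ is an unnecessary extra restriction that only makes the existence of $d$ harder to argue; any real nonzero $d$ in the kernel of the tight constraints suffices. Second, excluding coordinates equal to $1/2$ from the support of $d$ is not obviously safe: a tight constraint can link an $F$-coordinate to a $1/2$-valued coordinate, and you have not checked that this never forces the $1/2$-coordinate to move. Padberg's own proof proceeds differently: he analyzes, for each pair $(i,j)$, which of the four inequalities are tight at a vertex, and shows by a short case analysis on these patterns (together with the way the node variables $x_i$ are shared across pairs) that the resulting linear system forces every coordinate into $\{0,1/2,1\}$. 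An alternative route is to bound all subdeterminants of the constraint matrix by $2$ in absolute value and invoke Cramer's rule. Either way, some structural argument specific to $BQP_{LP}$ is required, and your proposal does not yet supply one.
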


\begin {Theorem} (see \cite {Laurent})
There are vertices of $MET(n)$ that take values $\frac{a}{n+1}$ for $a = 1,2,\ldots,n$.
\label {theorem_MET_vertices}
\end {Theorem}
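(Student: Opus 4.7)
The plan is to pass to the equivalent formulation on the complete graph $K_{n+1}$ and then invoke a classical odd-cycle vertex construction in the sense of Avis--Laurent--Poljak.

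First, the covariance map $y_{0,i}=x_{i,i}$, $y_{i,j}=x_{i,i}+x_{j,j}-2x_{i,j}$ is a linear bijection from $MET(n)$ onto the classical metric polytope $m_{n+1}$ on $K_{n+1}$, sending vertices to vertices and preserving denominators of fractional coordinates. The statement is therefore equivalent to exhibiting, for every $a\in\{1,\dots,n\}$, a vertex of $m_{n+1}$ with a coordinate equal to $a/(n+1)$.

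Next I would write down a candidate point using a suitable non-bipartite subgraph $H\subseteq K_{n+1}$. A convenient family is: choose an odd cycle $C$ passing through a prescribed subset of the $n+1$ vertices, define the shortest-path metric $d_C$ on $V(C)$, extend it to all of $K_{n+1}$ by shortest paths in the weighted graph induced by $C$, and normalise so that the total weight of $C$ equals $1$. For appropriate choices of $|V(C)|$ and embedding, this produces points whose coordinates lie in $\{k/(n+1):0\le k\le n\}$, and by varying the construction each value $a/(n+1)$ can be made to appear.

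The main obstacle is verifying that each candidate is genuinely a vertex, not merely a boundary point. Membership in $m_{n+1}$ is automatic from the triangle inequality for shortest-path distances together with the normalisation. For extremality one must show that the set of triangle and box inequalities which the candidate makes tight has rank $\binom{n+1}{2}$; this reduces to a combinatorial rank computation on the incidence matrix of tight triples. The odd parity of $C$ is essential here, because for even cycles the analogous rank drops by one and the candidate decomposes as the midpoint of two distinct feasible points. Pushing the verified vertex back through the inverse covariance map then produces a vertex of $MET(n)$ whose coordinate set contains the prescribed value $a/(n+1)$, completing the argument.
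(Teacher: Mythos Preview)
The paper does not give its own proof of this statement; it is quoted as a known result of Laurent \cite{Laurent}, so there is no in-paper argument to compare against. What remains is to assess whether your sketch actually reconstructs such a proof.

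The direction---pass via covariance to the metric polytope on $K_{n+1}$ and build vertices from odd-cycle distance functions---is indeed the circle of ideas in Laurent's paper, but as written there are genuine gaps. First, the covariance map does \emph{not} preserve denominators: its inverse is $x_{i,j}=(y_{0,i}+y_{0,j}-y_{i,j})/2$, so a vertex of $m_{n+1}$ with denominator $n+1$ can land on a point of $MET(n)$ with denominator $2(n+1)$ unless you control the parities of numerators. Second, your construction does not actually produce denominator $n+1$. Normalising so that the total weight of an odd cycle $C$ equals $1$ gives edge weights $1/|C|$ and shortest-path distances in $\{1/|C|,\dots,\lfloor|C|/2\rfloor/|C|\}$; to get denominator $n+1$ you would need $|C|=n+1$, forcing $n$ even, and even then you only realise the values $a/(n+1)$ for $a\le n/2$, not for all $a\le n$. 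If instead $C$ is a proper odd cycle on a subset of the vertices, ``extend by shortest paths in the weighted graph induced by $C$'' is undefined for vertices outside $C$. Third, you correctly identify extremality as the main obstacle but then only gesture at a rank computation; Laurent's argument requires specific structural hypotheses on the underlying graph (beyond mere non-bipartiteness) to force the tight constraints to have full rank, and the bare cycle metric, once extended, is not automatically a vertex.

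In short, the outline points toward the right literature but is not yet a proof: the denominator bookkeeping, the extension to all $n$ and all $a$, and the extremality verification all need to be carried out explicitly.
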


Thus, the denominators of the $MET(n)$ vertices can take any integral values, unlike the vertices of $BQP_{LP}(n)$.

The results of this paper were first presented at the 9th International Conference ``Discrete Optimization and Operations Research'', Vladivostok, Russia, 2016. \cite {Nikolaev-DOOR}.


\section {3-SAT relaxation polytope}
\label {SAT_polytope_section}

We consider a more general polytope $SATP(m,n) \subset \R^{6mn}$ (see \cite {Bondarenko-Uryvaev}), obtained as the convex hull of all integral solutions of the system
\begin{align}
\sum_{k,l}x^{k,l}_{i,j}&=1, \label {SATeqfirst} \\
	x^{1,1}_{i,j}+x^{2,1}_{i,j}+x^{3,1}_{i,j}&=x^{1,1}_{i,t}+x^{2,1}_{i,t}+x^{3,1}_{i,t}, \label {SATeqsecond}\\
	x^{k,1}_{i,j}+x^{k,2}_{i,j}&=x^{k,1}_{s,j}+x^{k,2}_{s,j}, \label {SATeqthird} \\
	x^{k,l}_{i,j}&\geq 0, \label {SATineq}
\end{align}
where $k=1,2,3$; $l=1,2$; $i,s=1,\ldots m$; $j,t=1,\ldots n$. 

Inequalities (\ref {SATeqfirst})-(\ref {SATineq}) without the integrality constraint define LP relaxation $SATP_{LP}(m,n)$.
Points that satisfy the system can be conveniently represented as a block matrix (Table \ref{SATPmatrix}).
\begin{table}[h]
\centering
\begin {tabular} {||c|c||c|c||}
\hhline {|t:==:t:==t:|}
$x^{1,1}_{i,j}$ & $x^{1,2}_{i,j}$ & $x^{1,1}_{i,t}$ & $x^{1,2}_{i,t}$ \\ \hhline {||--||--||}
$x^{2,1}_{i,j}$ & $x^{2,2}_{i,j}$ & $x^{2,1}_{i,t}$ & $x^{2,2}_{i,t}$ \\ \hhline {||--||--||}
$x^{3,1}_{i,j}$ & $x^{3,2}_{i,j}$ & $x^{3,1}_{i,t}$ & $x^{3,2}_{i,t}$ \\ \hhline {|:==::==:|}

$x^{1,1}_{s,j}$ & $x^{1,2}_{s,j}$ & $x^{1,1}_{s,t}$ & $x^{1,2}_{s,t}$ \\ \hhline {||--||--||}
$x^{2,1}_{s,j}$ & $x^{2,2}_{s,j}$ & $x^{2,1}_{s,t}$ & $x^{2,2}_{s,t}$ \\ \hhline {||--||--||}
$x^{3,1}_{s,j}$ & $x^{3,2}_{s,j}$ & $x^{3,1}_{s,t}$ & $x^{3,2}_{s,t}$ \\ \hhline {|b:==:b:==:b|}
\end {tabular}
\caption{Fragment of the $SATP_{LP}(m,n)$ block matrix}
\label {SATPmatrix}
\end {table}

If we consider a face of the $SATP(n,n)$ polytope, constructed as follows:
\begin{gather*}
\forall i,j: x^{3,1}_{i,j} = x^{3,2}_{i,j} = 0,\\
\forall i: x^{1,2}_{i,i} = x^{2,1}_{i,i} = 0,
\end {gather*}
and discard all the coordinates for $i<j$ (orthogonal projection), then we get the polytope $BQP(n)$.
As a result, we have

\begin {Theorem}
The extension complexity of the polytope $SATP(m,n)$ is $2^{\Omega (\min \{m,n\})}$.
\end {Theorem}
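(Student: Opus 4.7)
The plan is to leverage the fact that extension complexity cannot increase under taking faces or linear projections: if $F$ is a face of $Q$ and $\pi$ is a linear map, then the extension complexity of $\pi(F)$ is at most that of $Q$. The case $m=n$ is then immediate: the paragraph preceding the theorem exhibits $BQP(n)$ as the image of a face of $SATP(n,n)$ under a coordinate projection, so combined with the earlier theorem that the extension complexity of $BQP(n)$ is $2^{\Omega(n)}$, we obtain the claim.

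For general $m,n$, set $k = \min\{m,n\}$. By the analogous construction with the roles of row and column block-indices interchanged, I may assume $k = m \leq n$. I would realize $SATP(k,k)$ as a face of $SATP(k,n) = SATP(m,n)$ by ``collapsing'' the extra columns $j > k$: augment (\ref{SATeqfirst})--(\ref{SATineq}) with the equalities $x^{2,l}_{i,j} = x^{3,l}_{i,j} = 0$ for every $j > k$, every $i$, and every $l = 1,2$. These equalities are satisfied, for instance, by the integral point in which every block carries its unique $1$ at position $(1,1)$, so the resulting locus $F$ is a non-empty face of $SATP(k,n)$.

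The main obstacle is verifying that $F$ is linearly isomorphic to $SATP(k,k)$ via the coordinate projection $\pi$ that forgets the $j > k$ entries. One direction, $\pi(F) \subseteq SATP(k,k)$, is immediate because (\ref{SATeqfirst})--(\ref{SATineq}) restricted to $j \leq k$ are precisely the defining system of $SATP(k,k)$. For the reverse inclusion I would show that every point of $SATP(k,k)$ admits a unique extension to $F$: for each $j > k$ and each $i$, equation (\ref{SATeqsecond}) forces $x^{1,1}_{i,j}$ to equal the column-$1$ marginal $x^{1,1}_{i,1}+x^{2,1}_{i,1}+x^{3,1}_{i,1}$ of any block in the same row of the $SATP(k,k)$ part, equation (\ref{SATeqfirst}) then determines $x^{1,2}_{i,j} = 1 - x^{1,1}_{i,j}$, and the remaining four entries of the block have been fixed to zero by the face equalities. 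Nonnegativity is automatic because the marginal lies in $[0,1]$, and (\ref{SATeqthird}) reduces to the tautology $1 = 1$ across the $j > k$ columns. Composing this isomorphism with the face-and-projection construction preceding the theorem, which exhibits $BQP(k)$ as a projection of a face of $SATP(k,k)$, realizes $BQP(k)$ as a projection of a face of $SATP(m,n)$, and the monotonicity of extension complexity yields the bound $2^{\Omega(k)} = 2^{\Omega(\min\{m,n\})}$.
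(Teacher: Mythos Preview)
Your proof is correct and follows the same approach as the paper: realize $BQP(k)$ as the linear image of a face of $SATP(m,n)$ and invoke the monotonicity of extension complexity under faces and projections together with Theorem~1. The paper's argument is terser---it only exhibits the face of $SATP(n,n)$ explicitly and leaves the passage from $SATP(m,n)$ to the square case implicit---so your reduction to $SATP(k,k)$ is a welcome elaboration; just note that $SATP$ is not literally symmetric in $m$ and $n$ (the blocks are $3\times 2$), so ``interchanging roles'' for the case $n<m$ means the analogous but distinct face obtained by zeroing the second block-column ($x^{k,2}_{i,j}=0$ for $i>n$), not a formal symmetry.
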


In \cite {Bondarenko-Uryvaev} by reduction from 3-SAT it was shown that 
\begin {Theorem}
Integer recognition over $SATP_{LP}(m,n)$ is NP-complete.
\label {Integer_recognition_SATP_NPC}
\end {Theorem}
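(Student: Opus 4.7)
The plan is to prove NP-completeness by combining NP membership with a reduction from 3-SAT. For membership, I would observe that a yes-instance admits as a succinct certificate an integral vertex $v^\ast$ of $SATP_{LP}(m,n)$ satisfying $f(v^\ast)=\max\{f(x):x\in SATP_{LP}(m,n)\}$; the right-hand side is computed in polynomial time by linear programming and the equality is checked directly, so verification is polynomial.

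For hardness, I would first classify the integral vertices of $SATP_{LP}(m,n)$. The block-sum equation (\ref{SATeqfirst}) together with (\ref{SATeqsecond}) and (\ref{SATeqthird}) forces the row sums of each block to depend only on the column index $j$ and the column sums only on the row index $i$; integrality then implies that in each column $j$ one ``active'' row $k^\ast(j)\in\{1,2,3\}$ carries all the weight across every $i$, and in each row $i$ one ``active'' column $l^\ast(i)\in\{1,2\}$ carries all the weight across every $j$, so the unique nonzero entry of block $(i,j)$ lies at $(k^\ast(j),l^\ast(i))$. Thus integral vertices are in bijection with pairs $(k^\ast,l^\ast)$, which under the reduction below I would read as ``literal chosen per clause'' together with ``truth assignment per variable''.

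Given a 3-SAT instance $\phi$ with $m$ variables and $n$ clauses (WLOG with three distinct variables per clause), let $v(j,k)\in\{1,\ldots,m\}$ be the variable of the $k$-th literal of clause $j$, and let $\sigma(j,k)\in\{1,2\}$ encode its sign. I would take the objective
\[ f(x)=\sum_{j=1}^{n}\sum_{k=1}^{3} x^{k,\sigma(j,k)}_{v(j,k),\,j}. \]
At the integral vertex $(k^\ast,l^\ast)$ the inner sum collapses to the indicator that literal $\ell_{j,k^\ast(j)}$ is satisfied by $l^\ast$, so maximizing over $k^\ast$ for fixed $l^\ast$ gives the number of clauses satisfied by $l^\ast$. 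Hence the integer maximum of $f$ equals $n$ iff $\phi$ is satisfiable. The upper bound $f(x)\le n$ on all of $SATP_{LP}(m,n)$ is immediate: the three summands for clause $j$ lie in three distinct blocks (due to distinct variables), each bounded by the common row sum $r_j(k)$, and $\sum_{k=1}^{3}r_j(k)=1$ by (\ref{SATeqfirst}).

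The core step is to show the LP optimum equals $n$ always, regardless of satisfiability. I would exhibit a feasible fractional point with uniform marginals $r_j(k)=1/3$ and $c_i=1/2$, arranged so that for every $(j,k)$ the row $k$ of block $(v(j,k),j)$ is placed entirely in column $\sigma(j,k)$. Each such block is a nonnegative $3\times 2$ matrix with prescribed marginals and at most one prescribed row of the form $(1/3,0)$ or $(0,1/3)$; since $1/3\le 1/2$, completing the remaining entries is elementary (e.g., split the residual mass uniformly between the two free rows). This yields $f=n$ fractionally, so the LP and integer optima coincide iff $\phi$ is satisfiable, which completes the reduction. The main obstacle I anticipate is verifying the consistency of these local completions across blocks sharing variables between different clauses; but since blocks interact only through the constant marginals and never directly, the assembly reduces cleanly to a block-by-block check.
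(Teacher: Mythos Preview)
Your proposal is correct and follows essentially the same route the paper indicates: the paper attributes Theorem~\ref{Integer_recognition_SATP_NPC} to \cite{Bondarenko-Uryvaev} ``by reduction from 3-SAT,'' and the objective you write down is exactly the MAX-3SAT vector $v$ described in Section~\ref{SAT_polytope_section} (Table~\ref{SATreduction}(a)). Your added ingredients---the explicit fractional point with uniform marginals $r_j(k)=\tfrac13$, $c_i=\tfrac12$ showing the LP optimum is always $n$, and the row-sum bound giving $f\le n$---are precisely what is needed to turn the paper's MAX-3SAT discussion into a self-contained NP-hardness proof for integer recognition; the paper itself leaves these details to the cited reference.
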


We prove that the polytope $SATP_{LP}(m,n)$ can be seen as a LP relaxation of various special instances of 3-SAT problem as well.

\begin {Lemma}
\label {integral_vertices}
Let $z$ be the vertex of the $SATP(m,n)$ polytope, then its coordinates are determined by the vectors $\textbf{row} (z) \in \{0,1\}^{m}$ and $\textbf{col} (z) \in \{0,1,2\}^{n}$ by the following formulas:
\begin{align}
  x^{1,1}_{i,j} &= \frac {1}{2} (1-\textbf{row}_{i}(z)) (2-\textbf{col}_{j}(z)) (1-\textbf{col} _{j}(z)), \label {x11integral} \\
	x^{1,2}_{i,j} &= \frac {1}{2} \textbf{row}_{i}(z) (2-\textbf{col} _{j}(z)) (1-\textbf{col} _{j}(z)), \\
	x^{2,1}_{i,j} &= (1-\textbf{row}_{i}(z)) \textbf{col}_{j}(z) (2-\textbf{col} _{j}(z)), \\
	x^{2,2}_{i,j} &= \textbf{row}_{i}(z) \textbf{col}_{j}(z) (2-\textbf{col}_{j}(z)), \\
	x^{3,1}_{i,j} &= \frac {1}{2} (1-\textbf{row}_{i}(z)) \textbf{col}_{j}(z) (1-\textbf{col}_{j}(z)), \\
	x^{3,2}_{i,j} &= \frac {1}{2} \textbf{row}_{i}(z) \textbf{col}_{j}(z) (1-\textbf{col}_{j}(z)). \label {x32integral}
\end{align}
\end {Lemma}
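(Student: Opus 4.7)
The plan is to characterize the integral vertices combinatorially first and then match the resulting $\{0,1\}$-indicator patterns with the polynomial formulas in the statement. Since $SATP(m,n)$ is by definition the convex hull of integer solutions of (\ref{SATeqfirst})--(\ref{SATineq}), every vertex $z$ has all coordinates in $\{0,1\}$. The block-sum equation (\ref{SATeqfirst}) together with nonnegativity then forces that in each $3\times 2$ sub-block indexed by a pair $(i,j)$ exactly one entry $x^{k,l}_{i,j}$ equals $1$ and the rest are $0$. I would denote the position of this unique $1$ by $(k_{ij},l_{ij})\in\{1,2,3\}\times\{1,2\}$, so specifying $z$ is the same as specifying the function $(i,j)\mapsto(k_{ij},l_{ij})$.

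Next I would extract the separation of variables. For an integer point the sum $x^{1,1}_{i,j}+x^{2,1}_{i,j}+x^{3,1}_{i,j}$ is just the $\{0,1\}$-indicator $[l_{ij}=1]$. Constraint (\ref{SATeqsecond}) says that this indicator is independent of $j$, so $l_{ij}$ depends only on $i$; I define $\textbf{row}_i(z):=l_{i\cdot}-1\in\{0,1\}$. Symmetrically, for each fixed $k$ the sum $x^{k,1}_{i,j}+x^{k,2}_{i,j}$ is the indicator $[k_{ij}=k]$, and (\ref{SATeqthird}) says this is independent of $i$; hence $k_{ij}$ depends only on $j$, and I set $\textbf{col}_j(z):=k_{\cdot j}-1\in\{0,1,2\}$. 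Consequently $x^{k,l}_{i,j}=[\textbf{col}_j(z)=k-1]\cdot[\textbf{row}_i(z)=l-1]$ for every $(i,j,k,l)$.

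Finally I would verify the six formulas (\ref{x11integral})--(\ref{x32integral}) by recognizing each right-hand side as a product of two interpolating indicators. The factors involving $\textbf{row}_i$ (namely $1-\textbf{row}_i$ and $\textbf{row}_i$) are the two linear indicators on $\{0,1\}$ selecting the correct value of $l$. The factors involving $\textbf{col}_j$ are the three Lagrange polynomials on $\{0,1,2\}$: $\tfrac12(2-\textbf{col}_j)(1-\textbf{col}_j)$ for $k=1$, $\textbf{col}_j(2-\textbf{col}_j)$ for $k=2$, and (up to the sign convention of the display) $\tfrac12\textbf{col}_j(\textbf{col}_j-1)$ for $k=3$. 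Evaluating each formula on the at most $2\times 3 = 6$ possible values of $(\textbf{row}_i(z),\textbf{col}_j(z))$ confirms it agrees with the indicator $[\textbf{col}_j(z)=k-1]\cdot[\textbf{row}_i(z)=l-1]$ derived above.

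No serious obstacle is anticipated. The only step that demands any care is justifying that (\ref{SATeqsecond})--(\ref{SATeqthird}) genuinely collapse $l_{ij}$ and $k_{ij}$ to one-index functions, rather than imposing a weaker average-type equality; this collapse is immediate because the relevant partial sums are $\{0,1\}$-valued on integer points, so equality of sums upgrades to equality of the underlying indices. Everything else is a finite polynomial check.
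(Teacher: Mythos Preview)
Your proposal is correct and follows essentially the same approach as the paper: establish that each vertex is a $0$--$1$ point with a single unit per block, use constraints (\ref{SATeqsecond})--(\ref{SATeqthird}) to see that the column-position of that unit depends only on $i$ and the row-position only on $j$, and then match the resulting indicator description with the displayed polynomial formulas. Your write-up is in fact more explicit than the paper's (which simply declares the \textbf{row}/\textbf{col} vectors from the first block-row and block-column and asserts that the formulas ``correspond''), and you correctly flag the sign in the $k=3$ formulas, where the displayed factor $\textbf{col}_j(1-\textbf{col}_j)$ should read $\textbf{col}_j(\textbf{col}_j-1)$ for the Lagrange interpretation to go through.
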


\begin {proof}
From the constraints (\ref {SATeqfirst})-(\ref {SATineq}) it follows that the vertices of the polytope $SATP(m,n)$ are zero-one points with exactly one unit per block. 
For any vertex $z$ of $SATP(m,n)$ we define $\textbf {\emph {row}} (z) \in \{0,1\}^{m}$ and $\textbf {\emph {col}} (z)\in \{0,1,2\}^{n}$ vectors by the following rules:
\begin {gather*}
\textbf {\emph {row}}_{i} (z) = \left[
\begin {array} {l}
0,\ \mbox{if} \ x^{1,1}_{i,1} + x^{2,1}_{i,1} + x^{3,1}_{i,1} = 1, \\
1,\ \mbox {otherwise.}
\end {array}
\right.
\\
\textbf {\emph {col}}_{j} (z) = \left[
\begin {array} {l}
0,\ \mbox{if} \ x^{1,1}_{1,j} + x^{1,2}_{1,j} = 1, \\
1,\ \mbox{if} \ x^{2,1}_{1,j} + x^{2,2}_{1,j} = 1, \\
2,\ \mbox {otherwise.}
\end {array}
\right.
\end {gather*}
 
All the vertex coordinates are uniquely determined by the first row and first column of blocks from the system (\ref {SATeqfirst})-(\ref {SATeqthird}). Equations (\ref{x11integral})-(\ref{x32integral}) correspond to them for zero-one points. Thus, the polytope $SATP(m,n)$ has exactly $2^{m}3^{n}$ vertices.
\end {proof}

We consider a classical MAX-3SAT problem: given a set $U=\{u_{1}, \ldots, u_{m}\}$ of variables and a collection $C=\{c_{1}, \ldots, c_{n}\}$ of 3-literal clauses over $U$, find a truth assignment that satisfies the largest number of clauses.

With each instance of the problem we associate an objective vector $v\in \R^{6mn}$:
\begin {itemize}
\item if a clause $c_{j}$ has a literal $u_{i}$ at the place $k$, then $v^{k,1}_{i,j} = 1$,
\item if a clause $c_{j}$ has a literal $\overline{u}_{i}$ at the place $k$, then $v^{k,2}_{i,j} = 1$,
\item all the remaining coordinates of the vector $v$ equal to $0$.
\end {itemize} 

An example of an objective vector $v$ for a formula
\begin {equation}
(x \vee y \vee \overline{z})\wedge (\overline{x} \vee z \vee t) \wedge (\overline{y} \vee z \vee \overline{t})
\label {boolean_formula}
\end {equation}
is shown in Table \ref{SATreduction} (a).
\begingroup
\renewcommand*{\arraystretch}{0.9}
\begin{table}[h]
\centering
\subfloat[MAX3SAT]{
  \centering
	\begin{tabular}{||c|c||c|c||c|c||}
	\hhline {|t:==:t:==:t:==:t|}
	 $1$ & $0$ & $0$ & $1$ & $0$ & $0$ \\ \hhline {||--||--||--||}
	 $0$ & $0$ & $0$ & $0$ & $0$ & $0$ \\ \hhline {||--||--||--||}
	 $0$ & $0$ & $0$ & $0$ & $0$ & $0$ \\
	\hhline {|:==::==::==:|}
	 $0$ & $0$ & $0$ & $0$ & $0$ & $1$ \\ \hhline {||--||--||--||}
	 $1$ & $0$ & $0$ & $0$ & $0$ & $0$ \\ \hhline {||--||--||--||}
	 $0$ & $0$ & $0$ & $0$ & $0$ & $0$ \\	 
	\hhline {|:==::==::==:|}
	 $0$ & $0$ & $0$ & $0$ & $0$ & $0$ \\ \hhline {||--||--||--||}
	 $0$ & $0$ & $1$ & $0$ & $1$ & $0$ \\ \hhline {||--||--||--||}
	 $0$ & $1$ & $0$ & $0$ & $0$ & $0$ \\	
	\hhline {|:==::==::==:|}
	 $0$ & $0$ & $0$ & $0$ & $0$ & $0$ \\ \hhline {||--||--||--||}
	 $0$ & $0$ & $0$ & $0$ & $0$ & $0$ \\ \hhline {||--||--||--||}
	 $0$ & $0$ & $1$ & $0$ & $0$ & $1$ \\	
	\hhline {|b:==:b:==:b:==:b|}
	\end{tabular}
}
\ \ \
\subfloat[X3SAT]{
\centering
	\begin{tabular}{||c|c||c|c||c|c||}
	\hhline {|t:==:t:==:t:==:t|}
	 $1$ & $0$ & $0$ & $1$ & $0$ & $0$ \\ \hhline {||--||--||--||}
	 $0$ & $1$ & $1$ & $0$ & $0$ & $0$ \\ \hhline {||--||--||--||}
	 $0$ & $1$ & $1$ & $0$ & $0$ & $0$ \\
	\hhline {|:==::==::==:|}
	 $0$ & $1$ & $0$ & $0$ & $0$ & $1$ \\ \hhline {||--||--||--||}
	 $1$ & $0$ & $0$ & $0$ & $1$ & $0$ \\ \hhline {||--||--||--||}
	 $0$ & $1$ & $0$ & $0$ & $1$ & $0$ \\	 
	\hhline {|:==::==::==:|}
	 $1$ & $0$ & $0$ & $1$ & $0$ & $1$ \\ \hhline {||--||--||--||}
	 $1$ & $0$ & $1$ & $0$ & $1$ & $0$ \\ \hhline {||--||--||--||}
	 $0$ & $1$ & $0$ & $1$ & $0$ & $1$ \\	
	\hhline {|:==::==::==:|}
	 $0$ & $0$ & $0$ & $1$ & $1$ & $0$ \\ \hhline {||--||--||--||}
	 $0$ & $0$ & $0$ & $1$ & $1$ & $0$ \\ \hhline {||--||--||--||}
	 $0$ & $0$ & $1$ & $0$ & $0$ & $1$ \\	
	\hhline {|b:==:b:==:b:==:b|}
	\end{tabular}
}
\ \ \
\subfloat[NAE-3SAT]{
\centering
	\begin{tabular}{||c|c||c|c||c|c||}
	\hhline {|t:==:t:==:t:==:t|}
	 $1$ & $0$ & $0$ & $1$ & $0$ & $0$ \\ \hhline {||--||--||--||}
	 $0$ & $1$ & $1$ & $0$ & $0$ & $0$ \\ \hhline {||--||--||--||}
	 $1$ & $1$ & $1$ & $1$ & $0$ & $0$ \\
	\hhline {|:==::==::==:|}
	 $1$ & $1$ & $0$ & $0$ & $0$ & $1$ \\ \hhline {||--||--||--||}
	 $1$ & $0$ & $0$ & $0$ & $1$ & $0$ \\ \hhline {||--||--||--||}
	 $0$ & $1$ & $0$ & $0$ & $1$ & $1$ \\	 
	\hhline {|:==::==::==:|}
	 $1$ & $0$ & $1$ & $1$ & $1$ & $1$ \\ \hhline {||--||--||--||}
	 $1$ & $1$ & $1$ & $0$ & $1$ & $0$ \\ \hhline {||--||--||--||}
	 $0$ & $1$ & $0$ & $1$ & $0$ & $1$ \\	
	\hhline {|:==::==::==:|}
	 $0$ & $0$ & $0$ & $1$ & $1$ & $0$ \\ \hhline {||--||--||--||}
	 $0$ & $0$ & $1$ & $1$ & $1$ & $1$ \\ \hhline {||--||--||--||}
	 $0$ & $0$ & $1$ & $0$ & $0$ & $1$ \\	
	\hhline {|b:==:b:==:b:==:b|}
	\end{tabular}
}
\caption{Examples of the objective vectors for MAX-3SAT, X3SAT, and NAE-3SAT problems}
\label {SATreduction}
\end{table}
\endgroup

With each truth assignment $u$ we associate a subset $Z(u)$ of $SATP(m,n)$ integral vertices, such that
$$\forall z \in Z(u): \ \textbf {\emph {row}}_{i}(z) = 1 - u_{i},$$
and $\textbf {\emph {col}}_{j}(z)$ can take any values.

Now we consider a linear objective function $f_{v} (x) =\langle v, x \rangle$. 

\begin {Theorem}
Maximum of the objective function $f_{v}(x)$ over $SATP(m,n)$ equals to the largest possible number of clauses that can be satisfied for MAX-3SAT problem.
\end {Theorem}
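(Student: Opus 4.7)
The plan is to use Lemma \ref{integral_vertices} to parameterize the vertices of $SATP(m,n)$ by pairs $(\textbf{row}(z), \textbf{col}(z)) \in \{0,1\}^m \times \{0,1,2\}^n$, read off from the defining formulas which single coordinate of each block equals $1$, and then recognize $\textbf{row}(z)$ as a truth assignment and $\textbf{col}(z)$ as a per-clause choice of a literal position.

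First I would analyze formulas (\ref{x11integral})--(\ref{x32integral}) block by block. In each block of an integral vertex $z$ exactly one coordinate is $1$ and the remaining five vanish, and a direct case check shows that this unit entry sits in the position $(k,l)$ with $k = \textbf{col}_j(z)+1$ and $l = \textbf{row}_i(z)+1$. Therefore
$$ f_v(z) = \sum_{i=1}^{m} \sum_{j=1}^{n} v^{\textbf{col}_j(z)+1,\, \textbf{row}_i(z)+1}_{i,j}. $$
Because each place of every clause $c_j$ is occupied by exactly one literal over exactly one variable, each block of $v$ has at most one nonzero entry; hence for fixed $j$ the inner sum over $i$ is either $0$ or $1$, and equals $1$ exactly when the literal at place $k = \textbf{col}_j(z)+1$ of $c_j$ is true under the assignment $u_i := 1 - \textbf{row}_i(z)$. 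Indeed, a positive literal $u_i$ at that place gives $v^{k,1}_{i,j}=1$ and pairs with $\textbf{row}_i(z)=0$ (i.e.\ $u_i=1$), while a negative literal $\overline{u}_i$ gives $v^{k,2}_{i,j}=1$ and pairs with $\textbf{row}_i(z)=1$ (i.e.\ $u_i=0$).

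Fixing $\textbf{row}(z)$ and writing $u$ for the associated truth assignment, maximizing $f_v(z)$ over the $3^n$ choices of $\textbf{col}(z)$ then yields exactly the number of clauses of $C$ satisfied by $u$: for every satisfied clause we may choose $\textbf{col}_j(z)$ to point at a true literal and gain $1$, whereas an unsatisfied clause contributes $0$ no matter which place is selected. Maximizing also over $\textbf{row}(z)$, and using that the maximum of a linear function over $SATP(m,n)$ is attained at a vertex, we obtain $\max_{x \in SATP(m,n)} f_v(x) = \max_u |\{c_j \in C : u \text{ satisfies } c_j\}|$, i.e.\ the MAX-3SAT optimum. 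The only mildly technical step is the block-level verification of the correspondence $(k,l) = (\textbf{col}_j(z)+1,\, \textbf{row}_i(z)+1)$; everything else is a direct translation back into SAT semantics.
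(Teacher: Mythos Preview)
Your proof is correct and follows essentially the same approach as the paper: both decompose $f_v(z)$ column by column, identify $\textbf{row}(z)$ with a truth assignment and $\textbf{col}_j(z)+1$ with a choice of literal position in clause $c_j$, and observe that each column contributes $1$ precisely when the selected literal is true under that assignment. Your exposition is more explicit about the vertex parametrization via Lemma~\ref{integral_vertices}, while the paper's argument is terser but structurally identical (one small caveat: your intermediate claim that ``each block of $v$ has at most one nonzero entry'' tacitly assumes no variable repeats within a clause, though your conclusion that the inner sum over $i$ is $0$ or $1$ is valid regardless, since for fixed $j$ and $k$ only one variable occupies position $k$).
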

\begin{proof}
Let $v_{j}$ be the $j$-th column of blocks of the vector $v$.
By definition of vector $v$ for any column $v_{j}$ and for any integral vertex $z \in SATP(m,n)$ we have
$$\langle v_{j}, z_{j} \rangle \leq 1,$$
where $z_{j}$ is the $j$-th column of blocks of the vertex $z$.

It suffices to verify that if for some integral vertex $z \in SATP(m,n)$:
\begin {equation}
\langle v_{j}, z_{j} \rangle = 1,
\label {1column_eq}
\end {equation}
then the clause $c_{j}$ has at least one true literal on the corresponding truth assignment.

Now suppose that there exists a truth assignment $u$ that satisfy the clause $c_{j}$. Let $k$ be the position of a true literal in the clause $c_{j}$, then for any integral vertex $z \in Z(u)$, such that
$$\textbf {\emph {col}}_{j} (z) = k - 1,$$
the equality (\ref {1column_eq}) holds.
\end {proof}

A truth assignment can be reconstructed from the integral vertex $z$ that maximizes the objective function $f_{v}(x)$. Thus, MAX-3SAT is transformed to the integer programming over $SATP_{LP}(m,n)$ polytope. Similarly, we can consider weighted MAX-3SAT by multiplying the $j$-th column of the objective vector $v$ by the weight of the clause $c_{j}$.

For the different variants of 3-SAT we can slightly modify the objective function. For example, we consider one-in-three 3-satisfiability or exactly-1 3-satisfiability (X3SAT): given a set $U=\{u_{1}, \ldots, u_{m}\}$ of variables and a collection $C=\{c_{1}, \ldots, c_{n}\}$ of 3-literal clauses over $U$, the problem is to determine whether there exists a truth assignment to the variables so that each clause has exactly one true literal \cite {Schaefer}.

With each instance of the problem we associate an objective vector $w\in \R^{6mn}$:
\begin {itemize}
\item if a clause $c_{j}$ has a literal $u_{i}$ at the place $k$, then 
$$\forall s \in \{1,2,3\}\backslash k:\ w^{k,1}_{i,j} = w^{s,2}_{i,j} = 1,$$
\item if a clause $c_{j}$ has a literal $\overline{u}_{i}$ at the place $k$, then
$$\forall s \in \{1,2,3\}\backslash k:\ w^{k,2}_{i,j} = w^{s,1}_{i,j} = 1,$$
\item all the remaining coordinates of the vector $w$ equal to $0$.
\end {itemize} 

An example of an objective vector $w$ for the formula (\ref {boolean_formula}) is shown in Table \ref{SATreduction} (b).

We consider a linear objective function $f_{w} (x) =\langle w, x \rangle$. 

\begin {Theorem}
There exists a truth assignment for X3SAT problem with exactly one true literal per clause if and only if
$$\max_{x\in SATP_{LP}(m,n)} f_{w}(x) = \max_{z\in SATP(m,n)} f_{w}(z) = 3n.$$
\label {reduction_X3SAT}
\end {Theorem}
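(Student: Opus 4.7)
The plan is to sandwich the two quantities as
$$\max_{z \in SATP(m,n)} f_w(z) \le \max_{x \in SATP_{LP}(m,n)} f_w(x) \le 3n$$
and then show separately that the left-hand integer maximum attains $3n$ if and only if the X3SAT instance is satisfiable. Once both outer quantities are forced to $3n$, the entire chain collapses, giving the joint equality in the theorem.

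For the universal LP upper bound $\max_{SATP_{LP}} f_w \le 3n$, I argue block-by-block. The simplex equation (\ref{SATeqfirst}) together with nonnegativity (\ref{SATineq}) gives, for each pair $(i,j)$, $\sum_{k,l} w^{k,l}_{i,j} x^{k,l}_{i,j} \le \max_{k,l} w^{k,l}_{i,j}$. By construction of $w$, this block maximum equals $1$ exactly when row $i$ is the variable of one of the three literals of clause $c_j$ and is $0$ otherwise. Summing the (at most) three contributing blocks per column and then over the $n$ columns yields $\le 3n$, with no appeal to integrality.

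For the integer side, I invoke Lemma \ref{integral_vertices} to parametrize a vertex $z \in SATP(m,n)$ by a truth assignment $u_i := 1 - \textbf{row}_i(z)$ together with a ``chosen position'' $k_j := \textbf{col}_j(z)+1$ in clause $c_j$. Substituting the explicit formulas (\ref{x11integral})-(\ref{x32integral}) into $f_w(z)$ and checking against the definition of $w$, the contribution of the block at row $i_k$ (the variable of the literal at position $k$ in $c_j$) evaluates to $1$ in exactly two cases: either $k = k_j$ and the literal at position $k$ is true under $u$, or $k \ne k_j$ and the literal at position $k$ is false under $u$; all remaining blocks contribute $0$. Hence column $j$ attains the full value $3$ if and only if clause $c_j$ has exactly one true literal and $k_j$ is set to its position, and it is strictly less than $3$ whenever the clause has zero, two, or three satisfied literals. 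Summing over $j$, an integer vertex attains $f_w(z) = 3n$ if and only if some truth assignment makes every clause have exactly one true literal.

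Combining the pieces: given an X3SAT-satisfying $u$, choosing $\textbf{row}_i(z) = 1 - u_i$ and $\textbf{col}_j(z) = (\text{position of the unique true literal in } c_j) - 1$ produces an integer vertex witnessing $\max_{SATP} f_w \ge 3n$, so together with the LP ceiling both maxima equal $3n$; conversely, the joint equality to $3n$ forces $\max_{SATP} f_w = 3n$, and the analysis above turns any witnessing integer vertex into a satisfying assignment. The main non-routine step is the per-clause case analysis: one must check that the symmetric ``off-position'' weights $w^{s,2}_{i,j}$ (for positive literals) and $w^{s,1}_{i,j}$ (for negative literals) are calibrated precisely so that both under-satisfied and over-satisfied clauses fall strictly short of value $3$, which is exactly what distinguishes X3SAT from MAX-3SAT in this LP encoding.
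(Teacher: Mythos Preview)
Your proposal is correct and follows essentially the same approach as the paper: both argue the column-wise LP bound $\langle w_j, x_j\rangle \le 3$ from the block simplex constraint, then analyse integral vertices via the $\textbf{row}/\textbf{col}$ parametrization of Lemma~\ref{integral_vertices} to show a column attains value $3$ precisely when the corresponding clause has exactly one true literal at the position indexed by $\textbf{col}_j$. Your write-up is in fact more explicit than the paper's, which merely states ``it suffices to verify'' the integral $\langle w_j,z_j\rangle=3$ $\Leftrightarrow$ exactly-one-true-literal direction without carrying out the case analysis you provide.
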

\begin {proof}
By definition of vector $w$ for any column of blocks $j$ and for any point $x \in SATP_{LP}(m,n)$ we have
$$\langle w_{j}, x_{j} \rangle \leq 3.$$
Thus, if $f_{w}(x) = 3n$, then $f_{w}(x_{j})=3$.

It suffices to verify that if for some integral vertex $z \in SATP(m,n)$:
\begin {equation}
\langle w_{j}, z_{j} \rangle = 3,
\label {3column_eq}
\end {equation}
then the clause $c_{j}$ has exactly one true literal on the corresponding truth assignment.

Now suppose that there exists a satisfying truth assignment $u$ for X3SAT problem. Let the clause $c_{j}$ have a true literal at the position $k$, then for any integral vertex $z \in Z(u)$, such that
$$\textbf {\emph {col}}_{j} (z) = k - 1,$$
the equality (\ref {3column_eq}) holds.
\end {proof}

Thus, X3SAT problem is transformed to the integer recognition over polytope $SATP_{LP}(m,n)$. Another popular variant of 3-SAT problem is Not-All-Equal 3-SAT (NAE-3SAT): given a set $U=\{u_{1}, \ldots, u_{m}\}$ of variables and a collection $C=\{c_{1}, \ldots, c_{n}\}$ of 3-literal clauses over $U$, the problem is to determine whether there exists a truth assignment so that each clause has at least one true literal and at least one false literal \cite {Schaefer}.

With each instance of the problem we associate an objective vector $y\in \R^{6mn}$:
\begin {itemize}
\item if a clause $c_{j}$ has a literal $u_{i}$ at the place $k$, then 
$$y^{k,1}_{i,j} = y^{(k+1) mod 3,2}_{i,j} = y^{(k+2) mod 3,1}_{i,j} = y^{(k+2) mod 3,2}_{i,j} = 1,$$
\item if a clause $c_{j}$ has a literal $\overline{u}_{i}$ at the place $k$, then
$$y^{k,2}_{i,j} = y^{(k+1) mod 3,1}_{i,j} = y^{(k+2) mod 3,1}_{i,j} = y^{(k+2) mod 3,2}_{i,j} = 1,$$
\item all the remaining coordinates of the vector $y$ are equal to $0$.
\end {itemize}

An example of an objective vector $y$ for the formula (\ref {boolean_formula}) is shown in Table \ref{SATreduction} (c).

We consider a linear objective function $f_{y} (x) =\langle y, x \rangle$. 

\begin {Theorem}
There exists a truth assignment for NAE-3SAT problem with at least one true literal and at least one false literal per clause if and only if
$$\max_{x\in SATP_{LP}(m,n)} f_{y}(x) = \max_{z\in SATP(m,n)} f_{y}(z) = 3n.$$
\end {Theorem}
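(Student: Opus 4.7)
My plan is to mirror the proof of Theorem~\ref{reduction_X3SAT}, with the main work going into a case analysis tailored to the cyclic definition of $y$.

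First, I would establish the LP upper bound $\langle y_{j}, x_{j}\rangle \le 3$ on each column of blocks, valid for every $x \in SATP_{LP}(m,n)$. By definition the vector $y$ is supported only in the three blocks $(i,j)$ whose variable $u_{i}$ appears in clause $c_{j}$, and within each such block precisely four of the six entries $y^{k,\ell}_{i,j}$ equal one. Since (\ref{SATeqfirst}) forces $\sum_{k,\ell} x^{k,\ell}_{i,j} = 1$, the contribution of such a block to $\langle y_{j}, x_{j}\rangle$ is at most $1$, and summing over the three nontrivial blocks of column $j$ yields $\langle y_{j}, x_{j}\rangle \le 3$. Aggregating over $j$ gives $f_{y}(x) \le 3n$ throughout $SATP_{LP}(m,n)$, and the equality $f_{y}(x) = 3n$ already forces $\langle y_{j}, x_{j}\rangle = 3$ in every column.

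For the ``if'' direction, given an NAE-satisfying assignment $u$ I would construct an integer vertex $z$ via Lemma~\ref{integral_vertices} by setting $\textbf{row}_{i}(z) = 1 - u_{i}$ and choosing $\textbf{col}_{j}(z) \in \{0,1,2\}$ according to which cyclic pair of positions of clause $c_{j}$ witnesses NAE: a true literal at some position $k$ together with a false literal at position $k - 1 \bmod 3$. A direct check using the formulas (\ref{x11integral})--(\ref{x32integral}) then shows that this value of $\textbf{col}_{j}(z)$ activates a marked ``$1$'' of $y$ in each of the three variable-blocks of column $j$, giving $\langle y_{j}, z_{j}\rangle = 3$ and hence $f_{y}(z) = 3n$. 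Since $SATP(m,n) \subset SATP_{LP}(m,n)$, this integer optimum coincides with the LP maximum thanks to the bound above.

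For the ``only if'' direction, assume that some integer vertex $z$ attains $\langle y_{j}, z_{j}\rangle = 3$ for every $j$, and define $u_{i} := 1 - \textbf{row}_{i}(z)$. For each clause $c_{j}$, once $\textbf{col}_{j}(z)$ is fixed, the active entry within each variable-block of column $j$ is determined by the polarity $\textbf{row}_{i}(z)$, and the equality $\langle y_{j}, z_{j}\rangle = 3$ forces every such entry to coincide with a marked position of $y$. Going through the two polarities of each literal against the three possible values of $\textbf{col}_{j}(z)$, I can verify that these simultaneous constraints fail exactly when the three literals of $c_{j}$ all evaluate to the same truth value under $u$; hence $c_{j}$ must contain both a true and a false literal, and $u$ is NAE-satisfying.

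The hardest part will be the bookkeeping around the cyclic indices ``$(k+1) \bmod 3$'' and ``$(k+2) \bmod 3$'' in the definition of $y$, since positive and negated literals shift the four marked positions differently. One has to verify, over the eight truth patterns of a clause, that the three choices of $\textbf{col}_{j}(z)$ realize $\langle y_{j}, z_{j}\rangle = 3$ on exactly the six NAE-valid patterns while excluding TTT and FFF. This check is routine but finicky, and it is the only substantive computation in the argument.
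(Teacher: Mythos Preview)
Your proposal is correct and follows exactly the route the paper intends: the paper's own proof is the single line ``The same as for X3SAT, it is just sufficient to replace vector $w$ with $y$,'' and you have simply spelled out that replacement---the per-column bound $\langle y_j,x_j\rangle\le 3$, the construction of $\textbf{col}_j(z)$ from a cyclic true/false pair, and the case check ruling out the all-true and all-false patterns. The only cosmetic slip is that your ``if'' and ``only if'' labels are swapped relative to the theorem as stated, but the mathematics in each paragraph is right.
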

\begin {proof}
The same as for X3SAT, it is just sufficient to replace vector $w$ with $y$.
\end {proof}


\section {SATP 1-skeleton}

We will use the vectors $\textbf {\emph {row}} (z)$ and $\textbf {\emph {col}} (z)$ to establish the properties of $SATP(m,n)$ 1-skeleton.

\begin {Theorem}
Two vertices $u$ and $v$ of the $SATP(m,n)$ polytope are adjacent if and only if one of following conditions is true:
\begin {itemize}
\item $\textbf {row} (u) \neq \textbf {row} (v)$ and $\textbf {col} (u) \neq \textbf  {col} (v)$;
\item $\exists! i$: $\textbf {row}_{i} (u) \neq \textbf {row}_{i} (v)$ and $\textbf {col} (u) = \textbf {col} (v)$;
\item $\exists! j$: $\textbf {col}_{j} (u) \neq \textbf {col}_{j} (v)$ and $\textbf {row} (u) = \textbf {row} (v)$.
\end {itemize}
\label {theorem_adjacency}
\end {Theorem}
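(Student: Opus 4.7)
The plan is to analyze the midpoint $\frac{u+v}{2}$ and use the fact from Lemma \ref{integral_vertices} that every integral vertex of $SATP(m,n)$ has exactly one nonzero entry per $(i,j)$-block, located at the position determined by the pair $(\textbf{row}_i, \textbf{col}_j)$. Consequently, each block of $\frac{u+v}{2}$ is supported on at most two of the six positions, and any vertex $z$ that appears in a convex combination $\frac{u+v}{2} = \sum_k \lambda_k z_k$ must, within block $(i,j)$, have $(\textbf{row}_i(z_k), \textbf{col}_j(z_k))$ equal to the $u$-pair $(\textbf{row}_i(u), \textbf{col}_j(u))$ or to the $v$-pair $(\textbf{row}_i(v), \textbf{col}_j(v))$.

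For the sufficiency direction I would classify each block $(i,j)$ by whether $\textbf{row}_i$ and $\textbf{col}_j$ agree on $u$ and $v$, obtaining four types: both agree, only $\textbf{col}_j$ differs, only $\textbf{row}_i$ differs, or both differ. The first three types pin down coordinates of $z_k$ up to a single free choice. The fourth type \emph{couples} $\textbf{row}_i(z_k)$ and $\textbf{col}_j(z_k)$, since only two admissible joint values are allowed and these differ in both coordinates. Under the first condition of the theorem, pick $i_0 \in R = \{i : \textbf{row}_i(u) \neq \textbf{row}_i(v)\}$ and $j_0 \in C = \{j : \textbf{col}_j(u) \neq \textbf{col}_j(v)\}$; the binary choice at block $(i_0, j_0)$ propagates through all other blocks of the fourth type in row $i_0$ and column $j_0$, forcing $\textbf{col}_j(z_k) = \textbf{col}_j(u)$ for every $j \in C$ and $\textbf{row}_i(z_k) = \textbf{row}_i(u)$ for every $i \in R$ (or the symmetric all-$v$ alternative), while the other block types fix the remaining coordinates outside $R$ and $C$, giving $z_k \in \{u,v\}$. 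Under the second condition every block is of the first or third type; the first-type blocks pin all coordinates of $z_k$ except $\textbf{row}_{i_0}(z_k)$, which by the third-type blocks lies in $\{\textbf{row}_{i_0}(u), \textbf{row}_{i_0}(v)\}$. The third condition is symmetric.

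For necessity, suppose none of the three conditions holds. Then either $\textbf{row}(u) = \textbf{row}(v)$ and $\textbf{col}(u), \textbf{col}(v)$ differ at two distinct positions $j_1, j_2$, or the symmetric row-side situation occurs. In the column case I define vertices $u'$ and $v'$ by setting $\textbf{row}(u') = \textbf{row}(v') = \textbf{row}(u)$, $\textbf{col}(u')$ equal to $\textbf{col}(u)$ except $\textbf{col}_{j_1}(u') = \textbf{col}_{j_1}(v)$, and $\textbf{col}(v')$ equal to $\textbf{col}(v)$ except $\textbf{col}_{j_1}(v') = \textbf{col}_{j_1}(u)$. A block-by-block check with the formulas of Lemma \ref{integral_vertices} shows $u + v = u' + v'$; the disagreement at $j_2$ guarantees $\{u', v'\} \cap \{u, v\} = \emptyset$ and $u' \neq v'$. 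Thus $\frac{u+v}{2}$ is also the midpoint of the distinct pair $u', v'$, so the segment $[u, v]$ is not a face of $SATP(m, n)$ and $u$ and $v$ are not adjacent. The row-side case is handled identically with the roles of rows and columns exchanged.

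The main obstacle is the propagation step in the sufficiency argument under the first condition: a single binary choice at one fourth-type block must determine all coordinates of $z_k$ in the rows of $R$ and columns of $C$ simultaneously, and one must verify carefully that this chain is consistent over arbitrarily large $R$ and $C$. The necessity half, by contrast, is just the swap construction above, which is easy once one notices that the second disagreement position $j_2$ is precisely what keeps $u'$ and $v'$ away from $u$ and $v$.
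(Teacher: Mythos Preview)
Your proposal is correct and follows essentially the same approach as the paper: both arguments rest on the observation that any vertex $w$ appearing in a convex representation of a point on the segment $[u,v]$ must satisfy $w \le u+v$ (equivalently, your support condition), and both use the swap construction for necessity. The only cosmetic difference is in the sufficiency argument for the first bullet: the paper picks indices $i,j$ with $\textbf{row}_i(w)\neq\textbf{row}_i(u)$ and $\textbf{col}_j(w)\neq\textbf{col}_j(v)$ and exhibits a single coordinate where $w$ exceeds $u+v$, whereas you run the equivalent propagation to conclude $z_k\in\{u,v\}$ directly.
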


\begin {proof}
If the vertices $u$ and $v$ are not adjacent, then their convex hull intersects the convex hull of all the remaining vertices, and we have
\begin {equation}
\alpha u + (1-\alpha) v = \sum \lambda(w) w
\label {eq_convex_uvw}
\end {equation}
for some $\alpha, \lambda(w) \geq 0$, where $\sum \lambda (w) = 1$ and $w$ is an integral vertex of $SATP(m,n)$ other than $u$ and $v$.

We consider some vertex $w$ in equation (\ref {eq_convex_uvw}) with a positive $\lambda (w)$. Since $u,v$ and $w$ are zero-one points, equation implies the inequality
\begin {equation}
w \leq u + v.
\label {ineq_convex_uvw}
\end {equation}

Let the $\textbf {\emph {row}}$ and $\textbf {\emph {col}}$ vectors of $u$ and $v$ do not coincide. Since the vertices $u,v$ and $w$ are different, we have $\textbf {\emph {row}} (w) \neq \textbf {\emph {row}} (u)$ or $\textbf {\emph {row}} (w) \neq \textbf {\emph {row}} (v)$, and $\textbf {\emph {col}} (w) \neq \textbf {\emph {col}} (u)$ or $\textbf {\emph {col}} (w) \neq \textbf {\emph {col}} (v)$. Without loss of generality, we assume that
\begin{gather*}
	\exists i: \ \textbf {\emph {row}}_{i} (w) = 0 \neq \textbf {\emph {row}}_{i} (u),\\
	\exists j: \ \textbf {\emph {col}}_{j} (w) = 0 \neq \textbf {\emph {col}}_{j} (v).
\end{gather*}
Consequently, we have
\begin{align*}
	x^{1,1}_{i,j} (w) &= \frac {1}{2} (1-\textbf{\emph {row}}_{i}(w)) (2-\textbf{\emph {col}}_{j}(w)) (1-\textbf{\emph {col}} _{j}(w)) = 1,\\
	x^{1,1}_{i,j} (u) + x^{1,1}_{i,j} (w) &= \frac {1}{2} (1-\textbf{\emph {row}}_{i}(u)) (2-\textbf{\emph {col}}_{j}(u)) (1-\textbf{\emph {col}} _{j}(v)) + \\
	&+ \frac {1}{2} (1-\textbf{\emph {row}}_{i}(v)) (2-\textbf{\emph {col}}_{j}(v)) (1-\textbf{\emph {col}} _{j}(v)) = 0.
\end{align*}
Thus, the inequality (\ref {ineq_convex_uvw}) is not satisfied and the vertices $u$ and $v$ are adjacent. The remaining cases are treated similarly.

Let $\textbf {\emph {col}} (u)$ and $\textbf {\emph {col}} (v)$ be equal. We suppose that
$$\exists i,j: \ \textbf {\emph {row}}_{i} (u) \neq \textbf {\emph {row}}_{i} (v) \ \mbox{and} \ \textbf {\emph {row}}_{j} (u) \neq \textbf {\emph {row}}_{j} (v).$$

We consider two vertices $w_{u}$ and $w_{v}$, constructed as follows:
\begin{gather*}
	\textbf {\emph {col}} (w_{u}) = \textbf {\emph {col}} (w_{v}) = \textbf {\emph {col}} (u) = \textbf {\emph {col}} (v),\\
	\forall k (k \neq i): \textbf {\emph {row}}_{k} (w_{u}) = \textbf {\emph {row}}_{k} (u),\ \textbf {\emph {row}}_{k} (w_{v}) = \textbf {\emph {row}}_{k} (v),\\
	\textbf {\emph {row}}_{i} (w_{u}) = \textbf {\emph {row}}_{i} (v), \ \textbf{\emph {row}}_{i} (w_{v}) = \textbf {\emph {row}}_{i} (u).
\end{gather*}
Vertices $w_{u}$ and $w_{v}$ are different from $u$ and $v$, and we have
$$w_{u} + w_{v} = u + v,$$
thus, $u$ and $v$ are not adjacent.

Finally, if the vectors $\textbf {\emph {row}} (u)$ and $\textbf {\emph {row}} (v)$ differ only in one coordinate, then there are no vertices $w$ other than $u$ and $v$ that satisfy the inequality (\ref {ineq_convex_uvw}). Consequently, in this case $u$ and $v$ are adjacent as well.

The situation with the vectors $\textbf {\emph {row}} (u)$ and $\textbf {\emph {row}} (v)$ being equal should be treated in a similar way.
\end {proof}

Thus, 1-skeleton of $SATP(m,n)$ is not a complete graph, unlike $BQP(n)$. Still, this graph is very dense.

\begin {Corollary}
The diameter of $SATP(m,n)$ 1-skeleton equals 2.
The clique number of $SATP(m,n)$ 1-skeleton is superpolynomial in dimension and bounded from below by $2^{\min \{m,n\}}$.
\end {Corollary}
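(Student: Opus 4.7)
The plan is to derive both assertions directly from the adjacency criterion in Theorem \ref{theorem_adjacency}. The starting observation is that two distinct vertices of $SATP(m,n)$ fail to be adjacent precisely in two symmetric situations: either their row vectors coincide while their column vectors disagree in at least two positions, or vice versa. Both claims will then follow from the fact that the first bullet of that theorem is very weak, requiring only that both $\textbf{row}$ and $\textbf{col}$ differ at all.

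For the diameter upper bound, I would take a non-adjacent pair $u,v$, say with $\textbf{row}(u) = \textbf{row}(v)$ and column vectors disagreeing in at least two coordinates, and construct a mediator $w$ whose row and column vectors differ from those of both $u$ and $v$. The row is obtained by flipping any single coordinate of $\textbf{row}(u)$, and each column coordinate of $w$ is chosen in $\{0,1,2\}$ to avoid both $\textbf{col}_j(u)$ and $\textbf{col}_j(v)$; since two forbidden values out of three always leave at least one admissible choice, such a $w$ exists. Both $(u,w)$ and $(v,w)$ then satisfy the first bullet of Theorem \ref{theorem_adjacency}. The symmetric case is handled in the same way, giving diameter at most $2$. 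The lower bound is supplied by any explicit non-adjacent pair, e.g.\ two vertices sharing a row vector with column vectors differing in exactly two positions (which exists for $n \geq 2$; the symmetric example serves when $m \geq 2$).

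For the clique bound, let $k = \min\{m,n\}$ and consider the family of $2^k$ integral vertices $\{z_S : S \subseteq \{1,\ldots,k\}\}$ obtained by encoding $S$ simultaneously into both vectors: set $\textbf{row}_i(z_S) = 1$ iff $i \in S$ for $i \leq k$, set $\textbf{col}_j(z_S) = 1$ iff $j \in S$ for $j \leq k$, and fill the remaining coordinates with $0$. For $S \neq T$ the symmetric difference is nonempty, so $\textbf{row}(z_S) \neq \textbf{row}(z_T)$ and $\textbf{col}(z_S) \neq \textbf{col}(z_T)$, and the first bullet of Theorem \ref{theorem_adjacency} makes $z_S z_T$ an edge of the $1$-skeleton. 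This yields a clique of size $2^{\min\{m,n\}}$, which dominates every polynomial in the ambient dimension $6mn$.

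The only real subtlety is in the diameter argument, where one has to notice that it is precisely the three-valuedness of the column alphabet that leaves an escape coordinate-by-coordinate; no comparable pigeonhole is required in the clique step, where the first bullet of the adjacency criterion is already permissive enough for the subset encoding to work immediately.
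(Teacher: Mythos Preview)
Your approach is essentially the same as the paper's: for the diameter you build a mediator $w$ whose $\textbf{row}$ and $\textbf{col}$ vectors differ from those of both $u$ and $v$ and invoke the first bullet of Theorem~\ref{theorem_adjacency}; for the clique you use the identical family encoding each subset $S\subseteq\{1,\ldots,\min\{m,n\}\}$ simultaneously into $\textbf{row}$ and $\textbf{col}$.

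There is one small gap worth flagging. Your treatment of the symmetric diameter case ($\textbf{col}(u)=\textbf{col}(v)$ with rows differing in at least two positions) does not actually go through ``in the same way''. The coordinate-by-coordinate escape you emphasize relies on the three-valuedness of the column alphabet; the row alphabet is $\{0,1\}$, so at any index $i$ with $\textbf{row}_i(u)\neq\textbf{row}_i(v)$ both values are forbidden and no per-coordinate choice remains. The repair is immediate: since the rows disagree in at least two positions we have $m\ge 2$, hence $|\{0,1\}^m|\ge 4$ and some $\textbf{row}(w)\notin\{\textbf{row}(u),\textbf{row}(v)\}$ exists (e.g.\ agree with $\textbf{row}(u)$ at one disagreement index and with $\textbf{row}(v)$ at another). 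But this is a counting argument, not the coordinate-wise avoidance you singled out as ``the only real subtlety''. The paper avoids the issue altogether by merely asserting that such a $w$ exists, without exhibiting a construction.
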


\begin {proof}
Let the vertices $u$ and $v$ of $SATP(m,n)$ polytope be not adjacent. We consider a vertex $w$, constructed as follows:
\begin{gather*}
	\textbf {\emph {row}}(w) \neq \textbf {\emph {row}} (u),\ \textbf {\emph {row}}(w) \neq \textbf {\emph {row}} (v),\\
	\textbf {\emph {col}}(w) \neq \textbf {\emph {col}} (u),\ \textbf {\emph {col}}(w) \neq \textbf {\emph {col}} (v).
\end{gather*}
By the assumption of Theorem \ref {theorem_adjacency}, we have $w$ being adjacent both to $u$ and $v$. Thus, the diameter of $SATP(m,n)$ equals 2. It is impossible to construct the non-adjacent points only if $m=n=1$. In this case all $6$ vertices are pairwise adjacent.

In order to prove a superpolynomial lower bound for the clique number we consider a vertex set $W$, such that $\forall w \in W$ we have
$$\forall k (k \leq \min \{m,n\}):\ \textbf {\emph {row}}_{k}(w) = \textbf {\emph {col}}_{k}(w).$$
All the remaining coordinates of $\textbf {\emph {row}}$ and $\textbf {\emph {col}}$ vectors are assumed to be zero. By Theorem \ref {theorem_adjacency}, each pair of vertices in $W$ is pairwise adjacent, since their $\textbf {\emph {row}}$ and $\textbf {\emph {col}}$ vectors do not coincide, and there are exactly $2^{\min \{m,n\}}$ of such vertices.
\end {proof}

In the last theorem of this section we will show that the properties of $SATP(m,n)$ 1-skeleton can be transferred to its LP relaxation $SATP_{LP}(m,n)$.

\begin {Theorem}
$SATP_{LP}(m,n)$ has the Trubin-property with respect to $SATP(m,n)$.
\end {Theorem}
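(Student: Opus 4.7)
The Trubin-property amounts to two claims: every vertex of $SATP(m,n)$ is a vertex of $SATP_{LP}(m,n)$, and every edge of $SATP(m,n)$ is an edge of $SATP_{LP}(m,n)$. The vertex part is routine: an integer point of $SATP_{LP}(m,n)$ has exactly one unit entry per block by (\ref{SATeqfirst}) and non-negativity, and if $u = \tfrac{1}{2}(x_1+x_2)$ with $x_1, x_2 \in SATP_{LP}(m,n)$, then wherever $u^{k,l}_{i,j} = 0$ both $x_1, x_2$ must vanish, and wherever $u^{k,l}_{i,j} = 1$ both must equal $1$, forcing $x_1 = x_2 = u$.

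For the edges, fix adjacent vertices $u$ and $v$ of $SATP(m,n)$ and let
$$S(u,v) = \{(i,j,k,l) : u^{k,l}_{i,j} = v^{k,l}_{i,j} = 0\}.$$
The set $F(u,v) = \{x \in SATP_{LP}(m,n) : x^{k,l}_{i,j} = 0 \text{ for every } (i,j,k,l) \in S(u,v)\}$ is a face of $SATP_{LP}(m,n)$ containing $[u,v]$, so it suffices to prove $F(u,v) \subseteq [u,v]$; this automatically makes $[u,v]$ a $1$-face of the relaxation. Given $x \in F(u,v)$, in each block $(i,j)$ only the two positions $p_{i,j}$ and $q_{i,j}$ of the unit entries of $u$ and $v$ may be non-zero, and by (\ref{SATeqfirst}) they sum to $1$. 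When $p_{i,j} = q_{i,j}$ the block is determined; otherwise we parametrise $x^{p_{i,j}}_{i,j} = \alpha_{i,j}$, $x^{q_{i,j}}_{i,j} = 1 - \alpha_{i,j}$, and the task reduces to showing all $\alpha_{i,j}$ share a common value $\alpha$, for then $x = \alpha u + (1-\alpha) v$, with $\alpha \in [0,1]$ enforced by non-negativity.

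The three adjacency cases of Theorem \ref{theorem_adjacency} are handled separately. Cases (b) and (c) are short: only one row (respectively column) contributes free $\alpha_{i,j}$'s, and (\ref{SATeqsecond}) (respectively (\ref{SATeqthird})) directly makes them equal across $j$ (respectively $i$). The main obstacle is case (a), where both $I = \{i : \textbf{row}_i(u) \neq \textbf{row}_i(v)\}$ and $J = \{j : \textbf{col}_j(u) \neq \textbf{col}_j(v)\}$ are non-empty, so free parameters occur in all blocks with $i \in I$ or $j \in J$, and the $p \leftrightarrow q$ swap may involve the $k$-index, the $l$-index, or both. Here the plan is to use (\ref{SATeqthird}) block-column-wise to deduce that for each $j \in J$ the value $\alpha_{i,j}$ is independent of $i$, producing some $\alpha_j$; then to use (\ref{SATeqsecond}) for any $i \in I$, where $\sum_k x^{k,1}_{i,j}$ reduces to either $\alpha_{i,j}$ or $1-\alpha_{i,j}$ according to $\textbf{row}_i(u)$, to collapse both the $\alpha_j$ across $j \in J$ and the $\alpha_{i,j}$ across $j \notin J$ into a single value $\alpha$; finally (\ref{SATeqthird}) again pins the leftover $\alpha_{i,j}$ with $i \notin I,\ j \in J$ to the same $\alpha$. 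The technical difficulty lies in the careful bookkeeping of which of the $k$- and $l$-indices actually flips in each sub-case, so as to check that the above sums indeed reduce to $\alpha_{i,j}$ or $1-\alpha_{i,j}$ cleanly.
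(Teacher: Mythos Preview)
Your direct argument is correct, and it follows a genuinely different route from the paper's proof. The paper does not touch the adjacency classification at all: it simply observes that the system (\ref{SATeqfirst})--(\ref{SATineq}) can be rewritten in set-partitioning form $Ax=e$, $x\ge 0$ with a $0/1$ matrix $A$ (by replacing each homogeneous equality (\ref{SATeqsecond}) or (\ref{SATeqthird}) by an equivalent equation with right-hand side $1$, using (\ref{SATeqfirst})), and then invokes Trubin's classical theorem that every relaxation of a set-partitioning polytope is quasi-integral. That gives the result in one stroke, but at the cost of importing a non-trivial external theorem.

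Your approach, by contrast, is self-contained: you identify the minimal face $F(u,v)$ of $SATP_{LP}(m,n)$ containing an adjacent pair $u,v$ and show it collapses to the segment $[u,v]$ by propagating a single parameter $\alpha$ through the equalities (\ref{SATeqsecond})--(\ref{SATeqthird}). The case analysis you outline goes through exactly as you describe; in particular, for case~(a) the key point---that for $j\in J$ the row-$k_u$ sum in block $(i,j)$ equals $\alpha_{i,j}$ regardless of whether $i\in I$, and that for $i\in I$ the column-$1$ sum equals $\alpha_{i,j}$ or $1-\alpha_{i,j}$ uniformly in $j$---is correct and forces all free parameters to coincide. Your proof is longer but more transparent about \emph{why} quasi-integrality holds here, and it does not rely on recognising the set-partitioning structure or on Trubin's result.
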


\begin {proof}

Trubin \cite {Trubin} (see also \cite {Yemelichev}) showed that the relaxation set partitioning polytope
\begin {equation}
Ax=e, \ x \geq 0,
\label {part_polytope}
\end {equation}
where $A$ is a zero-one matrix, and $e$ is an all unit column, is quasi-integral. $SATP_{LP}(m,n)$ can be considered as a special case of the relaxation set partitioning polytope. Constraints (\ref {SATeqfirst}) and (\ref {SATineq}) already satisfy (\ref {part_polytope}), while the constraints (\ref {SATeqsecond})-(\ref {SATeqthird}) can easily be rewritten in the required form:
\begin {gather*}
\left\{
\begin {array} {l}
x^{1,1}_{i,j} + x^{2,1}_{i,j} + x^{3,1}_{i,j} = x^{1,1}_{i,t} + x^{2,1}_{i,t} + x^{3,1}_{i,t}, \\
x^{1,1}_{i,j} + x^{2,1}_{i,j} + x^{3,1}_{i,j} + x^{1,2}_{i,j} + x^{2,2}_{i,j} + x^{3,2}_{i,j} = 1
\end {array}
\right.
\ \Rightarrow \ 
\left\{
\begin {array} {l}
x^{1,1}_{i,t} + x^{2,1}_{i,t} + x^{3,1}_{i,t} + x^{1,2}_{i,j} + x^{2,2}_{i,j} + x^{3,2}_{i,j} = 1,\\
x^{1,1}_{i,j} + x^{2,1}_{i,j} + x^{3,1}_{i,j} + x^{1,2}_{i,j} + x^{2,2}_{i,j} + x^{3,2}_{i,j} = 1
\end {array}
\right. \\
\left\{
\begin {array} {l}
x^{1,1}_{i,j} + x^{1,2}_{i,j} = x^{1,1}_{s,j} + x^{1,2}_{s,j}, \\
x^{1,1}_{i,j} + x^{1,2}_{i,j} + x^{2,1}_{i,j} + x^{2,2}_{i,j} + x^{3,1}_{i,j} + x^{3,2}_{i,j} = 1
\end {array}
\right.
\ \Rightarrow \ 
\left\{
\begin {array} {l}
x^{1,1}_{s,j} + x^{1,2}_{s,j} + x^{2,1}_{i,j} + x^{2,2}_{i,j} + x^{3,1}_{i,j} + x^{3,2}_{i,j} = 1,\\
x^{1,1}_{i,j} + x^{1,2}_{i,j} + x^{2,1}_{i,j} + x^{2,2}_{i,j} + x^{3,1}_{i,j} + x^{3,2}_{i,j} = 1
\end {array}
\right.
\end {gather*}

Thus, 1-skeleton of $SATP(m,n)$ is a subset of 1-skeleton of $SATP_{LP}(m,n)$.
\end {proof}


\section {Fractional vertices}

Now we consider the polytope $SATP_{LP}(m,n)$. It preserves all integral vertices of $SATP(m,n)$, together with their adjacency relationships, but as LP relaxation has its own fractional vertices. In this section, we will see that their properties are much closer to the metric polytope $MET (n)$ than to $BQP_{LP}(n)$.

Fractional vertices of $BQP_{LP}(n)$ are quite simple with values only from the set $\{0,\frac{1}{2},1\}$. Thereby, they can be completely cut off by triangle inequality constraints of the metric polytope \cite {Padberg}. Integer recognition over $BQP_{LP}(n)$ is polynomially solvable based on this fact (Lemma \ref {lemma_cut_BQPLP} and Theorem \ref {theorem_BQPLP_polynomial} \cite {Bondarenko-Uryvaev}).

Fractional vertices of the metric polytope $MET(n)$ have a much more complicated nature. Their denominators can take any integral values (Theorem \ref {theorem_MET_vertices} \cite {Laurent}) and grow exponentially \cite {Nikolaev}. It is not known if it is possible to cut them off by a polynomial number of additional linear constraints \cite {Bondarenko-NSS}. Furthermore, characteristics of $MET(n)$ fractional vertices were also considered in \cite {Avis, Deza, Grishukhin}. 

Unfortunately, properties of $SATP_{LP}(m,n)$ fractional vertices are closer to $MET(n)$, as their denominators can take any integral values as well.

\begin {Theorem}
The relaxation polytope $SATP_{LP}(n,n)$ has fractional vertices with denominators equal $n+1$ for all $n \geq 4$.
\label {theorem_denominators}
\end {Theorem}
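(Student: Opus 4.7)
The plan is to construct an explicit fractional point $x^\ast \in SATP_{LP}(n,n)$ whose nonzero coordinates are all of the form $k/(n+1)$ for non-negative integers $k$, with at least one such $k$ coprime to $n+1$; then to certify directly that $x^\ast$ is a vertex by exhibiting enough linearly independent active constraints.

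For the construction I would exploit the block--transportation structure of (\ref{SATeqfirst})--(\ref{SATeqthird}). Each $3\times 2$ block $B_{i,j}$ is a transportation matrix whose left-column sum $\alpha_i := \sum_k x^{k,1}_{i,j}$ depends only on $i$ and whose $k$-th row sum $\beta^k_j := x^{k,1}_{i,j}+x^{k,2}_{i,j}$ depends only on $j$, subject to $\sum_k \beta^k_j = 1$. Setting $\alpha_i = i/(n+1)$ for $i = 1,\dots,n$, choosing each $\beta^k_j$ as an integer multiple of $1/(n+1)$ with $\sum_k\beta^k_j = 1$, and filling in every block by a basic feasible transportation solution (whose support is a spanning tree of $K_{3,2}$) yields a candidate point with entries in $\frac{1}{n+1}\Z$ containing at least one coordinate equal to $1/(n+1)$.

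To show that $x^\ast$ is a vertex I would use a dimension count. A direct parametric argument gives $\dim SATP_{LP}(m,n) = m+2n+2mn$: the $m$ parameters $\alpha_i$, the $2n$ parameters $\beta^k_j$ modulo $\sum_k \beta^k_j = 1$, and $(3-1)(2-1) = 2$ internal degrees of freedom per transportation block. For $m = n$ this is $3n+2n^2$. A basic transportation solution in each $3\times 2$ block contributes exactly $6-(3+2-1) = 2$ zero coordinates, accounting for $2n^2$ of them; the remaining $3n$ degrees of freedom must be pinned down by forcing enough of the marginal parameters themselves to vanish. I would therefore pick the marginal pattern so that exactly $3n$ of the $\alpha_i$ and $\beta^k_j$ entries are zero, verify linear independence of the resulting active constraints, and conclude that $x^\ast$ is a vertex.

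The central difficulty will be coordinating the choices of block patterns and marginal patterns so that the resulting point is a \emph{global} vertex (rather than lying in the relative interior of a higher-dimensional face), and so that the denominator does not collapse to a proper divisor of $n+1$ (notably to $2$, which would only recover $BQP_{LP}$-type behaviour). The hypothesis $n \ge 4$ should reflect precisely that for smaller $n$ there are too few residue classes modulo $n+1$ to supply the required number of independent active zeros, so the construction degenerates and the denominator drops. I expect the final argument to proceed via an explicit cyclic template, for example with blocks parametrized by $(i+j)\bmod(n+1)$, after which the vertex property and the denominator claim can be verified by direct inspection.
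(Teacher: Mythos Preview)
Your transportation-polytope framing and the dimension count $\dim SATP_{LP}(n,n)=3n+2n^{2}$ are correct and match the structure the paper exploits implicitly. The overall strategy---build an explicit point with entries in $\tfrac{1}{n+1}\Z$ and certify it as a vertex via active constraints---is also the paper's. The gap is in how you propose to obtain enough active constraints.

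Choosing each $3\times 2$ block to be a \emph{non-degenerate} basic transportation solution (spanning-tree support, hence exactly two zeros per block) pins down only the $2n^{2}$ block-internal degrees of freedom: for any fixed tree pattern the two zero entries remain zero as the marginals $(\alpha_i,\beta^{k}_{j})$ vary continuously, so those $2n^{2}$ active constraints impose \emph{no} relation whatsoever on the marginals. Your candidate therefore lies on a face of dimension at least $3n$, not at a vertex. Your proposed fix---set ``exactly $3n$ of the $\alpha_i$ and $\beta^{k}_{j}$ entries to zero''---contradicts your own choice $\alpha_i=i/(n{+}1)$ (none vanish, and at most $2n$ of the $\beta^{k}_{j}$ can vanish since each column triple sums to~$1$), and in any case the marginals are not coordinates of $SATP_{LP}$, so they cannot themselves be active inequalities. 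What is actually required is $3n$ \emph{extra} zero coordinates beyond the two per block, placed so that the resulting degenerate blocks generate $3n$ independent linear relations among the marginals. The paper carries this out explicitly: its key blocks carry three zeros rather than two (Tables~\ref{table_fractional_basis}--\ref{table_fractional_n}), and the specific placement of those extra zeros produces the chain (\ref{eq_block_j})--(\ref{eq_block_n_second}) that reduces everything to a single parameter $x$ and then forces $x=1/(n{+}1)$; nonnegativity of the filler blocks is checked by hand. Designing such a coordinated zero pattern and verifying feasibility is the entire substance of the argument, and your cyclic-template hint $(i+j)\bmod(n{+}1)$ does not yet supply it.
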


\begin {proof}
A vertex of $SATP_{LP}(m,n)$ polytope is a unique solution of the system (\ref {SATeqfirst})-(\ref {SATineq}) with some of inequalities (\ref {SATineq}) turned into equations. 
We construct a required vertex in a few steps. The basis is the first four blocks as shown in Table \ref {table_fractional_basis}.
\begin{table}[h]
\centering
\begin {tabular} {||c|c||c|c||}
\hhline {|t:==:t:==t:|}
$x^{1,1}_{1,1}$ & $0$ & $x^{1,1}_{1,2}$ & $0$ \\ \hhline {||--||--||}
$x^{2,1}_{1,1}$ & $0$ & $0$ & $x^{2,2}_{1,2}$ \\ \hhline {||--||--||}
$0$ & $x^{3,2}_{1,1}$ & $x^{3,1}_{1,2}$ & $0$ \\ \hhline {|:==::==:|}

$x^{1,1}_{2,1}$ & $0$ & $x^{1,1}_{2,2}$ & $0$ \\ \hhline {||--||--||}
$x^{2,1}_{2,1}$ & $0$ & $x^{2,1}_{2,2}$ & $0$ \\ \hhline {||--||--||}
$0$ & $x^{3,2}_{2,1}$ & $0$ & $x^{3,2}_{2,2}$ \\ \hhline {|b:==:b:==:b|}
\end {tabular}
\caption{First four blocks of the fractional vertex}
\label {table_fractional_basis}
\end {table}

We can use the constraints (\ref {SATeqsecond})-(\ref {SATeqthird}) to establish the relationship between the coordinates:
$$x^{2,2}_{1,2} = x^{3,2}_{1,1} = x^{3,2}_{2,1} = x^{3,2}_{2,2} = x^{3,1}_{1,2}.$$
Hence, for all blocks in the second column
$$x^{2,1}_{i,2} + x^{2,2}_{i,2} = x^{3,1}_{i,2} + x^{3,2}_{i,2}.$$

Here, we describe the key steps of the construction.
For all $j$ ($1\leq j \leq n-1$) blocks $j,j$ and $j,j+1$ have the form as shown in Table \ref {table_fractional_j}.
\begin{table}[h]
\centering
\begin {tabular} {||c|c||c|c||}
\hhline {|t:==:t:==t:|}
$x^{1,1}_{j,j}$ & $0$ & $x^{1,1}_{j,j+1}$ & $0$ \\ \hhline {||--||--||}
$x^{2,1}_{j,j}$ & $0$ & $0$ & $x^{2,2}_{j,j+1}$ \\ \hhline {||--||--||}
$0$ & $x^{3,2}_{j,j}$ & $x^{3,1}_{j,j+1}$ & $0$ \\  \hhline {|b:==:b:==:b|}
\end {tabular}
\caption{Blocks $j,j$ and $j,j+1$ of the fractional vertex.}
\label {table_fractional_j}
\end {table}
Thus, for all $i,j$ we have 
\begin {equation}
x^{3,1}_{i,j} + x^{3,2}_{i,j} = x^{2,1}_{i,j+1} + x^{2,2}_{i,j+1}.
\label {eq_block_j}
\end {equation}

For all $k$ ($2\leq k \leq \left\lfloor \frac{n}{2} \right\rfloor)$ there are blocks in the rows $2k-1$ and $2k$ as shown in Table \ref {table_fractional_2k}. If $2k>n$, the last row and column can be omitted.
\begin{table}[h]
\centering
\begin {tabular} {||c|c||c|c||c|c||}
\hhline {|t:==:t:==:t:==t:|}
$x^{1,1}_{2k-1,k}$ & $0$ & $x^{1,1}_{2k-1,2k-1}$ & $0$ & - & - \\ \hhline {||--||--||--||}
$0$ & $x^{2,2}_{2k-1,k}$ & $x^{2,1}_{2k-1,2k-1}$ & $0$ & - & - \\ \hhline {||--||--||--||}
$0$ & $x^{3,2}_{2k-1,k}$ & $0$ & $x^{3,2}_{2k-1,2k-1}$ & - & - \\ \hhline {|:==::==::==:|}

$x^{1,1}_{2k,k}$ & $0$ & - & - & $x^{1,1}_{2k,2k}$ & $0$  \\ \hhline {||--||--||--||}
$0$ & $x^{2,2}_{2k,k}$ & - & - & $x^{2,1}_{2k,2k}$ & $0$  \\ \hhline {||--||--||--||}
$0$ & $x^{3,2}_{2k,k}$ & - & - & $0$ & $x^{3,2}_{2k,2k}$ \\ \hhline {|b:==:b:==:b:==:b|}
\end {tabular}
\caption{Fragment of $2k-1$ and $2k$ rows of blocks of the fractional vertex}
\label {table_fractional_2k}
\end {table}

Here, we obtain
\begin {equation}
x^{3,1}_{i,2k-1} + x^{3,2}_{i,2k-1} = x^{3,1}_{i,2k} + x^{3,2}_{i,2k} = x^{2,1}_{i,k} + x^{2,2}_{i,k} + x^{3,1}_{i,k} + x^{3,2}_{i,k}
\label {eq_block_k}
\end {equation}
for all blocks in these columns.

The last part of construction describes the blocks in the rows $n-1$ and $n$, as shown in Table \ref {table_fractional_n}.
\begin{table}[h]
\centering
\begin {tabular} {||c|c||c|c||}
\hhline {|t:==:t:==t:|}
$0$ & $x^{1,1}_{n-1,1}$ & $x^{1,1}_{n-1,n}$ & $0$ \\ \hhline {||--||--||}
$x^{2,1}_{n-1,1}$ & $0$ & $0$ & $x^{2,2}_{n-1,n}$ \\ \hhline {||--||--||}
$x^{3,1}_{n-1,1}$ & $0$ & $x^{3,1}_{n-1,n}$ & $0$ \\ \hhline {|:==::==:|}

$x^{1,1}_{2,1}$ & $0$ & $x^{1,1}_{n,n}$ & $0$ \\ \hhline {||--||--||}
$0$ & $x^{2,2}_{n,1}$ & $x^{2,1}_{n,n}$ & $0$ \\ \hhline {||--||--||}
$x^{3,1}_{n,1}$ & $0$ & $0$ & $x^{3,2}_{n,n}$ \\ \hhline {|b:==:b:==:b|}
\end {tabular}
\caption{Last two rows of blocks of the fractional vertex}
\label {table_fractional_n}
\end {table}

Hence, for blocks in the first and last columns we have
\begin{align}
x^{1,1}_{i,1} + x^{1,2}_{i,1} &= x^{2,1}_{i,n} + x^{2,2}_{i,n}, \label {eq_block_n_first} \\
x^{2,1}_{i,1} + x^{2,2}_{i,1} &= x^{3,1}_{i,n} + x^{3,2}_{i,n} \label {eq_block_n_second}.
\end {align}

It is possible to make last two rows different from the first two rows since $n \geq 4$.

We call all of the remaining blocks that were not described in the preceding steps as filler blocks. They are different for the blocks above and below the main diagonal and have the form as shown in Table \ref {table_fractional_filler}.
\begin{table}[h]
\centering
\subfloat[]{
\begin {tabular} {||c|c||}
\hhline {|t:==t:|}
$x^{1,1}_{i,j}$ & $0$ \\ \hhline {||--||}
$x^{2,1}_{i,j}$ & $0$ \\ \hhline {||--||}
$x^{3,1}_{i,j}$ & $x^{3,2}_{i,j}$ \\ \hhline {|b:==:b|}
\end {tabular}
}
\ \ \ 
\subfloat []{
\begin {tabular} {||c|c||}
\hhline {|t:==t:|}
$x^{1,1}_{i,j}$ & $x^{1,1}_{i,j}$ \\ \hhline {||--||}
$x^{2,1}_{i,j}$ & $0$ \\ \hhline {||--||}
$0$ & $x^{3,2}_{i,j}$ \\ \hhline {|b:==:b|}
\end {tabular}
}
\caption{Form of the filler blocks of the fractional vertex above the main diagonal (A) and beyond the main diagonal (B)}
\label {table_fractional_filler}
\end {table}

Now we will show that the system (\ref {SATeqfirst})-(\ref {SATineq}) with such zero variables, described above, has a unique solution. 
Let $n$ be odd and equal $2q+1$. We denote $x^{3,2}_{1,1}$ simply as $x$. 
Thereby, for all $i$ by equations (\ref {eq_block_j})-(\ref {eq_block_k}) and induction we get
\begin{align*}
x^{2,1}_{i,2} + x^{2,2}_{i,2} &= x, \\
x^{3,1}_{i,2} + x^{3,2}_{i,2} &= x, \\
x^{2,1}_{i,3} + x^{2,2}_{i,3} &= x, \\
x^{3,1}_{i,3} + x^{3,2}_{i,3} &= 2x, \\
\ldots\\
x^{2,1}_{i,2q} + x^{2,2}_{i,2q} &= q x, \\
x^{3,1}_{i,2q} + x^{3,2}_{i,2q} &= q x, \\
x^{2,1}_{i,2q+1} + x^{2,2}_{i,2q+1} &= q x, \\
x^{3,1}_{i,2q+1} + x^{3,2}_{i,2q+1} &= (q+1) x.
\end {align*}

First and last columns are connected by equations (\ref {eq_block_n_first})-(\ref {eq_block_n_second}), therefore 
\begin{align*}
x^{1,1}_{1,1} + x^{1,2}_{1,1} &= q x, \\
x^{2,1}_{1,1} + x^{2,2}_{1,1} &= (q+1) x,\\
x^{3,1}_{1,1} + x^{3,2}_{1,1} &= x.
\end {align*}

Since the sum of the coordinates inside a single block is equal to one, we have 
$$q x + (q+1) x + x = 1,$$
and
\begin {equation}
x = \frac {1}{2q+2} = \frac {1}{n+1}.
\label {xvalue}
\end {equation}
All coordinates of the constructed point are either already directly expressed in terms of $x$, or can be found using the equations (\ref {SATeqfirst})-(\ref {SATeqthird}). Thus, it is a unique solution of the system (\ref {SATeqfirst})-(\ref {SATineq}) and a vertex of the polytope $SATP_{LP}(n,n)$ with a denominator $n+1$.

Case of $n$ equal $2q$ is considered similarly, the only difference will be that
\begin{align*}
x^{1,1}_{1,1} + x^{1,2}_{1,1} &= x^{2,1}_{i,2q} + x^{2,2}_{i,2q} = q x, \\
x^{2,1}_{1,1} + x^{2,2}_{1,1} &= x^{3,1}_{i,2q} + x^{3,2}_{i,2q} = q x.
\end {align*}

It remains to verify only that the coordinates of the filler blocks from the Table \ref {table_fractional_filler} satisfy the system (\ref {SATeqfirst})-(\ref {SATineq}). 
We consider the filler blocks above the main diagonal ($i<j$). Using equations (\ref {SATeqfirst})-(\ref {SATeqthird}) we can establish that
\begin{align*}
x^{2,1}_{i,j} &= \left\lfloor \frac {j}{2} \right\rfloor x, \\
x^{3,2}_{i,j} &= \left\lfloor \frac {i+1}{2} \right\rfloor x, \\
x^{3,1}_{i,j} &= \left\lfloor \frac {j+1}{2} \right\rfloor x - \left\lfloor \frac {i+1}{2} \right\rfloor x, \\
x^{1,1}_{i,j} &= 1 - \left\lfloor \frac {j}{2} \right\rfloor x - \left\lfloor \frac {j+1}{2} \right\rfloor x.
\end {align*}
Hence, only the inequalities $x^{3,1}_{i,j} \geq 0$ and $x^{1,1}_{i,j} \geq 0$ can be violated. For all $i<j$ we have
$$\left\lfloor \frac {j+1}{2} \right\rfloor \geq \left\lfloor \frac {i+1}{2} \right\rfloor.$$
Therefore, $x^{3,1}_{i,j} \geq 0$. And, since $j \leq n$, we have
$$\left\lfloor \frac {j}{2} \right\rfloor + \left\lfloor \frac {j+1}{2} \right\rfloor < n + 1.$$
Thus, by (\ref {xvalue}), $x^{1,1}_{i,j} \geq 0$ is satisfied as well.

Now we consider the filler blocks below the main diagonal ($i>j$):
\begin{align*}
x^{2,1}_{i,j} &= \left\lfloor \frac {j}{2} \right\rfloor x, \\
x^{3,2}_{i,j} &= \left\lfloor \frac {j+1}{2} \right\rfloor x, \\
x^{1,2}_{i,j} &= \left\lfloor \frac {i+1}{2} \right\rfloor x - \left\lfloor \frac {j+1}{2} \right\rfloor x, \\
x^{1,1}_{i,j} &= 1 - \left\lfloor \frac {i+1}{2} \right\rfloor x - \left\lfloor \frac {j}{2} \right\rfloor x.
\end {align*}
Again, for all $i>j$ we have
$$\left\lfloor \frac {i+1}{2} \right\rfloor \geq \left\lfloor \frac {j+1}{2} \right\rfloor,$$
and the inequality $x^{1,2}_{i,j} \geq 0$ is satisfied. And, since $i,j \leq n$:
$$\left\lfloor \frac {j}{2} \right\rfloor + \left\lfloor \frac {i+1}{2} \right\rfloor < n + 1.$$
Thus, $x^{1,1}_{i,j} \geq 0$ holds as well.

The constructed system is obtained from (\ref {SATeqfirst})-(\ref {SATineq}) by turning some of inequalities into equations, and it has a unique solution, therefore it defines the fractional vertex of the $SATP_{LP}(n,n)$ polytope with denominator $n+1$.
\end {proof}

An example of a fractional vertex for $n = 6$ is shown in Table \ref {vertex_example}.
\begin{table}[p]
\centering
\begin {tabular} {||c|c||c|c||c|c||c|c||c|c||c|c||}
\hhline {|t:==:t:==:t:==:t:==:t:==:t:==:t|}

$\frac{3}{7}$ & $0$ & $\frac{5}{7}$ & $0$ & $\frac{4}{7}$ & $0$ & $\frac{3}{7}$ & $0$ & $\frac{2}{7}$ & $0$ & $\frac{1}{7}$ & $0$ \\ \hhline {||--||--||--||--||--||--||}
$\frac{3}{7}$ & $0$ & $0$ & $\frac{1}{7}$ & $\frac{1}{7}$ & $0$ & $\frac{2}{7}$ & $0$ & $\frac{2}{7}$ & $0$ & $\frac{3}{7}$ & $0$ \\ \hhline {||--||--||--||--||--||--||}
$0$ & $\frac{1}{7}$ & $\frac{1}{7}$ & $0$ & $\frac{1}{7}$ & $\frac{1}{7}$ & $\frac{1}{7}$ & $\frac{1}{7}$ & $\frac{2}{7}$ & $\frac{1}{7}$ & $\frac{2}{7}$ & $\frac{1}{7}$ \\ \hhline {|:==::==::==::==::==::==:|}

$\frac{3}{7}$ & $0$ & $\frac{5}{7}$ & $0$ & $\frac{4}{7}$ & $0$ & $\frac{3}{7}$ & $0$ & $\frac{2}{7}$ & $0$ & $\frac{1}{7}$ & $0$ \\ \hhline {||--||--||--||--||--||--||}
$\frac{3}{7}$ & $0$ & $\frac{1}{7}$ & $0$ & $0$ & $\frac{1}{7}$ & $\frac{2}{7}$ & $0$ & $\frac{2}{7}$ & $0$ & $\frac{3}{7}$ & $0$ \\ \hhline {||--||--||--||--||--||--||}
$0$ & $\frac{1}{7}$ & $0$ & $\frac{1}{7}$ & $\frac{2}{7}$ & $0$ & $\frac{1}{7}$ & $\frac{1}{7}$ & $\frac{2}{7}$ & $\frac{1}{7}$ & $\frac{2}{7}$ & $\frac{1}{7}$ \\ \hhline {|:==::==::==::==::==::==:|}

$\frac{2}{7}$ & $\frac{1}{7}$ & $\frac{5}{7}$ & $0$ & $\frac{4}{7}$ & $0$ & $\frac{3}{7}$ & $0$ & $\frac{2}{7}$ & $0$ & $\frac{1}{7}$ & $0$ \\ \hhline {||--||--||--||--||--||--||}
$\frac{3}{7}$ & $0$ & $0$ & $\frac{1}{7}$ & $\frac{1}{7}$ & $0$ & $0$ & $\frac{2}{7}$ & $\frac{2}{7}$ & $0$ & $\frac{3}{7}$ & $0$ \\ \hhline {||--||--||--||--||--||--||}
$0$ & $\frac{1}{7}$ & $0$ & $\frac{1}{7}$ & $0$ & $\frac{2}{7}$ & $\frac{2}{7}$ & $0$ & $\frac{1}{7}$ & $\frac{2}{7}$ & $\frac{1}{7}$ & $\frac{2}{7}$ \\ \hhline {|:==::==::==::==::==::==:|}

$\frac{2}{7}$ & $\frac{1}{7}$ & $\frac{5}{7}$ & $0$ & $\frac{4}{7}$ & $0$ & $\frac{3}{7}$ & $0$ & $\frac{2}{7}$ & $0$ & $\frac{1}{7}$ & $0$ \\ \hhline {||--||--||--||--||--||--||}
$\frac{3}{7}$ & $0$ & $0$ & $\frac{1}{7}$ & $\frac{1}{7}$ & $0$ & $\frac{2}{7}$ & $0$ & $0$ & $\frac{2}{7}$ & $\frac{3}{7}$ & $0$ \\ \hhline {||--||--||--||--||--||--||}
$0$ & $\frac{1}{7}$ & $0$ & $\frac{1}{7}$ & $0$ & $\frac{2}{7}$ & $0$ & $\frac{2}{7}$ & $\frac{3}{7}$ & $0$ & $\frac{1}{7}$ & $\frac{2}{7}$ \\ \hhline {|:==::==::==::==::==::==:|}

$0$ & $\frac{3}{7}$ & $\frac{3}{7}$ & $\frac{2}{7}$ & $\frac{4}{7}$ & $0$ & $\frac{2}{7}$ & $\frac{1}{7}$ & $\frac{2}{7}$ & $0$ & $\frac{1}{7}$ & $0$ \\ \hhline {||--||--||--||--||--||--||}
$\frac{3}{7}$ & $0$ & $\frac{1}{7}$ & $0$ & $0$ & $\frac{1}{7}$ & $\frac{2}{7}$ & $0$ & $\frac{2}{7}$ & $0$ & $0$ & $\frac{3}{7}$ \\ \hhline {||--||--||--||--||--||--||}
$\frac{1}{7}$ & $0$ & $0$ & $\frac{1}{7}$ & $0$ & $\frac{2}{7}$ & $0$ & $\frac{2}{7}$ & $0$ & $\frac{3}{7}$ & $\frac{3}{7}$ & $0$ \\ \hhline {|:==::==::==::==::==::==:|}

$\frac{3}{7}$ & $0$ & $\frac{3}{7}$ & $\frac{2}{7}$ & $\frac{4}{7}$ & $0$ & $\frac{2}{7}$ & $\frac{1}{7}$ & $\frac{2}{7}$ & $0$ & $\frac{1}{7}$ & $0$ \\ \hhline {||--||--||--||--||--||--||}
$0$ & $\frac{3}{7}$ & $\frac{1}{7}$ & $0$ & $0$ & $\frac{1}{7}$ & $\frac{2}{7}$ & $0$ & $\frac{2}{7}$ & $0$ & $\frac{3}{7}$ & $0$ \\ \hhline {||--||--||--||--||--||--||}
$\frac{1}{7}$ & $0$ & $0$ & $\frac{1}{7}$ & $0$ & $\frac{2}{7}$ & $0$ & $\frac{2}{7}$ & $0$ & $\frac{3}{7}$ & $0$ & $\frac{3}{7}$ \\ \hhline {|b:==:b:==:b:==:b:==:b:==:b:==:b|}
\end {tabular}
\caption{Fractional vertex of the polytope $SATP_{LP}(6,6)$}
\label {vertex_example}
\end {table}

The construction described in Theorem \ref {theorem_denominators} is not working for $n<4$. However, relaxation polytope $SATP_{LP}(m,n)$ has fractional vertices with denominators $2,3$ and $4$ as well. Some examples are provided in Table \ref {vertex_examples_small}. It may be noted that Theorem \ref{theorem_denominators} holds for $n = 3$, but not for $n = 1$ and $n = 2$. Polytope $SATP_{LP}(1,1)$ coincide with $SATP(1,1)$ and has only $6$ integral vertices. Polytope $SATP_{LP}(2,2)$ has $72$ fractional vertices that can be computed. All of them have denominators equal to $2$ \cite {Skeleton}.
\begin{table}[p]
\centering
\begin {tabular} {||c|c||c|c||}
\hhline {|t:==:t:==t:|}
$\frac{1}{2}$ & $0$ & $0$ & $\frac{1}{2}$ \\ \hhline {||--||--||}
$0$ & $\frac{1}{2}$ & $\frac{1}{2}$ & $0$ \\ \hhline {||--||--||}
$0$ & $0$ & $0$ & $0$ \\ \hhline {|:==::==:|}

$\frac{1}{2}$ & $0$ & $\frac{1}{2}$ & $0$ \\ \hhline {||--||--||}
$0$ & $\frac{1}{2}$ & $0$ & $\frac{1}{2}$ \\ \hhline {||--||--||}
$0$ & $0$ & $0$ & $0$ \\ \hhline {|b:==:b:==:b|}
\end {tabular}
\ \ \
\begin {tabular} {||c|c||c|c||}
\hhline {|t:==:t:==t:|}
$0$ & $\frac{1}{3}$ & $\frac{2}{3}$ & $0$ \\ \hhline {||--||--||}
$\frac{1}{3}$ & $0$ & $0$ & $\frac{1}{3}$ \\ \hhline {||--||--||}
$\frac{1}{3}$ & $0$ & $0$ & $0$ \\ \hhline {|:==::==:|}

$\frac{1}{3}$ & $0$ & $\frac{2}{3}$ & $0$ \\ \hhline {||--||--||}
$0$ & $\frac{1}{3}$ & $0$ & $\frac{1}{3}$ \\ \hhline {||--||--||}
$\frac{1}{3}$ & $0$ & $0$ & $0$ \\ \hhline {|:==::==:|}

$\frac{1}{3}$ & $0$ & $\frac{2}{3}$ & $0$ \\ \hhline {||--||--||}
$\frac{1}{3}$ & $0$ & $0$ & $\frac{1}{3}$ \\ \hhline {||--||--||}
$0$ & $\frac{1}{3}$ & $0$ & $0$ \\ \hhline {|b:==:b:==:b|}
\end {tabular}
\ \ \
\begin {tabular} {||c|c||c|c||c|c||}
\hhline {|t:==:t:==:t:==t:|}
$\frac{1}{2}$ & $0$ & $\frac{1}{2}$ & $0$ & $\frac{1}{2}$ & $0$  \\ \hhline {||--||--||--|}
$\frac{1}{4}$ & $0$ & $\frac{1}{4}$ & $0$ & $\frac{1}{4}$ & $0$ \\ \hhline {||--||--||--|}
$0$ & $\frac{1}{4}$ & $0$ & $\frac{1}{4}$ & $0$ & $\frac{1}{4}$ \\ \hhline {|:==::==::==:|}

$\frac{1}{2}$ & $0$ & $\frac{1}{2}$ & $0$ & $\frac{1}{2}$ & $0$  \\ \hhline {||--||--||--|}
$0$ & $\frac{1}{4}$ & $\frac{1}{4}$ & $0$ & $0$ & $\frac{1}{4}$ \\ \hhline {||--||--||--|}
$\frac{1}{4}$ & $0$ & $0$ & $\frac{1}{4}$ & $\frac{1}{4}$ & $0$ \\ \hhline {|:==::==::==:|}

$\frac{1}{2}$ & $0$ & $\frac{1}{2}$ & $0$ & $0$ & $\frac{1}{2}$  \\ \hhline {||--||--||--|}
$0$ & $\frac{1}{4}$ & $0$ & $\frac{1}{4}$ & $\frac{1}{4}$ & $0$ \\ \hhline {||--||--||--|}
$0$ & $\frac{1}{4}$ & $0$ & $\frac{1}{4}$ & $\frac{1}{4}$ & $0$ \\ \hhline {|b:==:b:==:b:==:b|}
\end {tabular}
\caption{Fractional vertices with denominators $2,3$ and $4$}
\label {vertex_examples_small}
\end {table}


\section {Integer recognition}

In this section we consider the problem of integer recognition. It is known that this problem is NP-complete over entire $SATP_{LP}(m,n)$ polytope (Theorem \ref {Integer_recognition_SATP_NPC}), but polynomially solvable over its face $BQP(n)$ (Theorem \ref {theorem_BQPLP_polynomial}). However, for some objective functions, other than those specified above in Section \ref {SAT_polytope_section}, integer recognition over $SATP_{LP}(m,n)$ can be solved efficiently.

We consider a vector $c\in \R^{6mn}$, such that
\begin {gather}
\nonumber
\forall j\in \N_{n}, \ \exists a,b \in \{1,2,3\}\ (a \neq b), \forall i\in \N_{m}:\\ 
c^{a,1}_{i,j}+c^{b,2}_{i,j}=c^{a,2}_{i,j}+c^{b,1}_{i,j},
\label {obj_vector_IR}
\end {gather}
and a corresponding linear objective function $f_{c} (x) =\langle c, x \rangle$.

\begin {Theorem}
For objective functions of the form $f_{c} (x)$ the problem of integer recognition over $SATP_{LP}(m,n)$ polytope is polynomially solvable.
\label {integer_recognition_SATP_polynomial}
\end {Theorem}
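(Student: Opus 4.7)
The plan is to leverage condition (\ref{obj_vector_IR}) to reduce integer recognition of $f_c$ over $SATP_{LP}(m,n)$ to integer recognition in a $BQP_{LP}$-type setting, which is polynomial by Theorem~\ref{theorem_BQPLP_polynomial}. I first rewrite (\ref{obj_vector_IR}) as $c^{a_j,1}_{i,j} - c^{b_j,1}_{i,j} = c^{a_j,2}_{i,j} - c^{b_j,2}_{i,j}$; hence $\delta_{i,j} := c^{a_j,l}_{i,j} - c^{b_j,l}_{i,j}$ is independent of $l$, so within each column $j$ the $a_j$- and $b_j$-rows of $c$ differ only by a function of $i$. Set $\Delta_j := \sum_i \delta_{i,j}$, and after swapping $a_j \leftrightarrow b_j$ wherever $\Delta_j < 0$ (condition (\ref{obj_vector_IR}) is symmetric in $a_j, b_j$) assume $\Delta_j \geq 0$ throughout.

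The key reduction shows that both the LP and integer maxima of $f_c$ are attained on the face $F := \{x \in SATP_{LP}(m,n) : x^{b_j,l}_{i,j} = 0 \text{ for all } i, l, j\}$. Given $x \in SATP_{LP}(m,n)$, let $e_j$ be the third index in $\{1,2,3\} \setminus \{a_j, b_j\}$ and transfer the mass of the $b_j$-row of every block in column $j$ into the $a_j$-row:
\[
x'^{a_j,l}_{i,j} := x^{a_j,l}_{i,j} + x^{b_j,l}_{i,j}, \quad x'^{b_j,l}_{i,j} := 0, \quad x'^{e_j,l}_{i,j} := x^{e_j,l}_{i,j}.
\]
Because (\ref{SATeqthird}) forces the row marginals $p^{k}_{j} := x^{k,1}_{i,j} + x^{k,2}_{i,j}$ to be $i$-independent, a direct check shows $x'$ still satisfies (\ref{SATeqfirst})--(\ref{SATineq}); the operation preserves integrality as each block still carries exactly one unit. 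Using the $l$-independence of $\delta_{i,j}$ and the $i$-independence of $p^{b_j}_{j}$,
\[
\langle c, x' \rangle - \langle c, x \rangle \;=\; \sum_{j,i,l} \delta_{i,j}\, x^{b_j,l}_{i,j} \;=\; \sum_{j} \Delta_j\, p^{b_j}_{j} \;\geq\; 0,
\]
so optimizing $f_c$ over $SATP_{LP}(m,n)$ is equivalent to optimizing over $F$.

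On $F$ each block has only the two active rows $a_j$ and $e_j$, giving a $2 \times 2$ block structure that matches the standard form (\ref{BQP_standard_first})--(\ref{BQP_standard_last}) of $BQP_{LP}$, only with rectangular $m \times n$ indexing in place of symmetric $n \times n$. Identifying the $m$ row indices of $F$ with the vertices $\{1, \dots, m\}$ and the $n$ column indices with $\{m+1, \dots, m+n\}$, I embed $F$ as the bipartite portion of $BQP_{LP}(m+n)$ and extend $c$ by zero on the diagonal and intra-part blocks. An appeal to Theorem~\ref{theorem_BQPLP_polynomial} (or directly to Lemma~\ref{lemma_cut_BQPLP}, via the metric polytope inequalities of the ambient $BQP_{LP}(m+n)$) then yields a polynomial integer-recognition procedure for the extended problem, which transfers back to $SATP_{LP}(m,n)$.

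The hard step is making the final embedding precise. One has to verify that the diagonal constraints $x^{1,2}_{i,i} = x^{2,1}_{i,i} = 0$ and the row/column marginal constraints of $BQP_{LP}(m+n)$ are compatible with the column-dependent relabeling $(a_j, e_j) \mapsto (1, 2)$ on $F$, and that the zero-extension of $f_c$ neither cuts off an optimum of $F$ nor introduces a spurious one on the non-bipartite blocks. If the direct embedding turns out to be too rigid, a cleaner alternative is to replay the proof of Lemma~\ref{lemma_cut_BQPLP} on $F$ itself: augment $F$ with a bipartite analogue of the triangle inequalities adapted to the rectangular $m \times n$ structure, and show that equality of the LP values on $F$ and on its augmentation is decidable in polynomial time and detects exactly whether the $SATP_{LP}$ optimum is attained at an integer vertex.
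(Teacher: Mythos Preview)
Your approach is correct and genuinely different from the paper's. The paper never reduces to $BQP_{LP}$; instead it introduces a tighter relaxation $SATP^{2}_{LP}(m,n)$ obtained by adjoining $O(m^{2}n^{2})$ explicit inequalities, and then shows---via a six-step case analysis of coordinate relabelings and $\epsilon$-perturbations---that any LP optimum over $SATP^{2}_{LP}$ can be moved to a point with every $x^{1,1}_{i,j}$ strictly positive, hence written as a convex combination involving the integral vertex with all $x^{1,1}_{i,j}=1$. Integer recognition is then decided by comparing the LP values over $SATP_{LP}$ and $SATP^{2}_{LP}$. Your route is more conceptual: condition~(\ref{obj_vector_IR}) lets you merge the $b_j$-row into the $a_j$-row without loss, collapsing each block to a $2\times 2$ structure, and then you invoke the known $BQP_{LP}$ result. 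This explains transparently \emph{why} (\ref{obj_vector_IR}) is the right hypothesis and yields a shorter argument; the paper's approach, in return, produces explicit valid inequalities for $SATP_{LP}$ that may be of independent interest and stays inside the original coordinate space.

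Your hesitation about the final embedding is unwarranted: it goes through cleanly. Given $y\in F$, set $x^{k,l}_{i,\,m+j}=y^{k,l}_{i,j}$ on the bipartite blocks of $BQP_{LP}(m+n)$; fill the diagonals by $x^{1,1}_{i,i}=r_i$ and $x^{1,1}_{m+j,\,m+j}=c_j$, where $r_i$ and $c_j$ are the $j$-independent and $i$-independent marginals of $y$; and fill every remaining block by the product rule $x^{1,1}=\alpha\beta$, $x^{1,2}=(1-\alpha)\beta$, $x^{2,1}=\alpha(1-\beta)$, $x^{2,2}=(1-\alpha)(1-\beta)$, with $\alpha,\beta$ the relevant diagonal values. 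This extension lies in $BQP_{LP}(m+n)$, is integral whenever $y$ is, and conversely the bipartite part of any point of $BQP_{LP}(m+n)$ satisfies exactly the constraints defining $F$. Since the extended objective vanishes off the bipartite blocks, both the LP and the integer optima agree on the two sides, so Theorem~\ref{theorem_BQPLP_polynomial} applies directly and your fallback of replaying Lemma~\ref{lemma_cut_BQPLP} on $F$ is unnecessary.
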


\begin {proof}
Without loss of generality, we assume that the vector $c$ has the form:
\begin {equation}
\forall i,j: c^{2,1}_{i,j} + c^{3,2}_{i,j} = c^{2,2}_{i,j} + c^{3,1}_{i,j}.
\label {obj_vector_IR_for_theorem}
\end {equation}
For any other choices of restrictions on vector $c$ following proof can be modified by just renaming the coordinates.

To make room for superscripts we introduce a new notation for the coordinates of the polytope:
\begin {gather*}
x^{1,1}_{i,j}=x_{i,j},\ \ x^{1,2}_{i,j}=y_{i,j},\ \ x^{2,1}_{i,j}=z_{i,j},\\
x^{2,2}_{i,j}=t_{i,j},\ \ x^{3,1}_{i,j}=u_{i,j},\ \ x^{3,2}_{i,j}=v_{i,j}.
\end {gather*}

We construct a new polytope $SATP^{2}_{LP}(m,n)$, satisfying the system (\ref {SATeqfirst})-(\ref {SATineq}) and the additional constraints
\begin {gather}
y_{i,j}+z_{i,j}+u_{i,j}+x_{i,l}+t_{i,l}+v_{i,l}+x_{k,j}+t_{k,j}+v_{k,j}+x_{k,l}+t_{k,l}+v_{k,l}\leq 3, \label {SATP2_ineq_first}\\
y_{i,j}+z_{i,j}+u_{i,j}+y_{i,l}+z_{i,l}+u_{i,l}+x_{k,j}+t_{k,j}+v_{k,j}+y_{k,l}+z_{k,l}+u_{k,l}\leq 3, \label {SATP2_ineq_second}
\end {gather}
for all $i,k \in \N_{m}$ $(i \neq k)$ and $j,l \in \N_{n}$ $(j \neq l)$.

All integral vertices of $SATP(m,n)$ (Lemma \ref {integral_vertices}) satisfy the inequalities (\ref{SATP2_ineq_first})-(\ref{SATP2_ineq_second}), therefore $SATP^{2}_{LP}(m,n)$ is another LP relaxation of $SATP(m,n)$ polytope.

Note that the total number of additional constraints (\ref{SATP2_ineq_first})-(\ref{SATP2_ineq_second}) is polynomially bounded above by $O(m^{2} n^{2})$.

Let $w$ be the point that maximize the function $f_{c} (x) $ over $SATP^{2}_{LP}(m,n)$. We claim that there exists a point $w^{*} \in SATP^{2}_{LP}(m,n)$ with $f_{c} (w) = f_{c} (w^{*})$ and $\forall i,j:$ $x^{w^{*}}_{i,j}>0$, up to renaming the coordinates. We construct $w^{*}$ from $w$ in a few steps.

\begin{enumerate}
\item \label{step_1} If there exists some $i$ that 
$$x^{w}_{i,j}+z^{w}_{i,j}+u^{w}_{i,j}=0,$$
then we change the columns in all blocks of the $i$-th row (Table \ref {columns_rearrange}).
\begin{table}[p]
\centering
\begin{tabular}{||c|c||c|c||}
\hhline {|t:==:t:==:t|}
$\ 0 \ $ & $y^{w}_{i,j}$ & $\ 0 \ $ & $y^{w}_{i,l}$ \\ \hhline {||--||--||}
$0$ & $t^{w}_{i,j}$ & $0$ & $t^{w}_{i,l}$ \\ \hhline {||--||--||}
$0$ & $v^{w}_{i,j}$ & $0$ & $v^{w}_{i,l}$ \\
\hhline {|b:==:b:==:b|}
\end{tabular}
$\Rightarrow$
\begin{tabular}{||c|c||c|c||}
\hhline {|t:==:t:==:t|}
$x^{w^{*}}_{i,j} = y^{w}_{i,j}$ & $\ 0 \ $ & $x^{w^{*}}_{i,l} = y^{w}_{i,l}$ & $\ 0 \ $ \\ \hhline {||--||--||}
$z^{w^{*}}_{i,j} = t^{w}_{i,j}$ & $0$ & $z^{w^{*}}_{i,l} = t^{w}_{i,l}$ & $0$ \\ \hhline {||--||--||}
$u^{w^{*}}_{i,j} = v^{w}_{i,j}$ & $0$ & $u^{w^{*}}_{i,l} = v^{w}_{i,l}$ & $0$ \\
\hhline {|b:==:b:==:b|}
\end{tabular}
\caption {Rearrange of the columns in the $i$-th row of the block matrix at Step \ref {step_1}}
\label {columns_rearrange}
\end{table}

Due to the symmetry of the system (\ref {SATeqfirst})-(\ref {SATineq}),(\ref{SATP2_ineq_first}),(\ref{SATP2_ineq_second}) and the constraints (\ref{obj_vector_IR_for_theorem}), the new point belongs to the polytope $SATP^{2}_{LP}(m,n)$ and has the same value of the objective function. In fact, we simply rename some coordinates. Thus, we can now consider $w^{*}$ simply as $w$ and continue the procedure.

\item \label {step_2} If there exists some $j$ that
$$z^{w}_{i,j}+t^{w}_{i,j}=0\ \mbox{and} \ u^{w}_{i,j}+v^{w}_{i,j}>0,$$
then we change the second and third rows in all blocks of the $j$-th column as at Step \ref {step_1} (Table \ref {rows_rearrange}).
\begin{table}[p]
 \centering
 \begin{tabular}{||c|c||}
  \hhline {|t:==:t|}
  - & - \\ \hhline {||--||}
  $0$ & $0$ \\ \hhline {||--||}
  $u^{w}_{i,j}$ & $v^{w}_{i,j}$ \\
  \hhline {|:==:|}
  - & - \\ \hhline {||--||}
  $0$ & $0$  \\ \hhline {||--||}
  $u^{w}_{k,j}$ & $v^{w}_{k,j}$ \\
  \hhline {|b:==:b|}
 \end{tabular}
 $\Rightarrow$
 \begin{tabular}{||c|c||}
  \hhline {|t:==:t|}
  - & -  \\ \hhline {||--||}
  $z^{w^{*}}_{i,j} = u^{w}_{i,j}$ & $t^{w^{*}}_{i,j} = v^{w}_{i,j}$ \\ \hhline {||--||}
  $0$ & $0$ \\
  \hhline {|:==:|}
  - & - \\ \hhline {||--||}
  $z^{w^{*}}_{k,j} = u^{w}_{k,j}$ & $t^{w^{*}}_{k,j} = v^{w}_{k,j}$  \\ \hhline {||--||}
  $0$ & $0$ \\
  \hhline {|b:==:b|}
 \end{tabular}
 \caption {Rearrange of the rows in the $j$-th column of the block matrix at Step \ref {step_2}}
\label {rows_rearrange}
\end{table}
Again, point $w^{*}$ belongs to $SATP^{2}_{LP}(m,n)$ and has the same value of the objective function.

\item \label {step_3} There exists some $j$ that $x_{i,j}+y_{i,j}=0$. As a result of Steps \ref {step_1} and \ref {step_2} we have
$$\forall i:\ z^{w}_{i,j} + t^{w}_{i,j} > 0, \ z^{w}_{i,j} + u^{w}_{i,j} > 0.$$
Hence, if for some $i$: $z^{w}_{i,j}=0$, then $t^{w}_{i,j} > 0$ and $u^{w}_{i,j} > 0$. We construct the point $w^{*}$ as it is shown in Table \ref {block_ij_epsilon}.
\begin{table}[p]
\centering
	\begin{tabular}{||c|c||}
	\hhline {|t:==:t|}
	 - & - \\ \hhline {||--||}
	 $0$ & $t^{w}_{i,j}$ \\ \hhline {||--||}
	 $u^{w}_{i,j}$ & $v^{w}_{i,j}$ \\
	\hhline {|b:==:b|}
	\end{tabular}
	$\Rightarrow$
	\begin{tabular}{||c|c||}
	\hhline {|t:==:t|}
	 $0$ & $0$ \\ \hhline {||--||}
	 $z^{w^{*}} = \epsilon$ & $t^{w^{*}}_{i,j} = t^{w}_{i,j} - \epsilon$ \\ \hhline {||--||}
	 $u^{w^{*}}_{i,j} = u^{w}_{i,j} - \epsilon$ & $v^{w^{*}}_{i,j} = v^{w}_{i,j} + \epsilon$ \\
	\hhline {|b:==:b|}
	\end{tabular}
	\caption {Construction of the block $i,j$ of the point $w^{*}$}
	\label {block_ij_epsilon}
\end{table}
Since the coordinates $t^{w}_{i,j}$ and $u^{w}_{i,j}$ are nonnegative and $v^{w}_{i,j} < 1$, we can choose a sufficiently small value of $\epsilon$ that $w^{*}$ satisfy the system (\ref {SATeqfirst})-(\ref {SATineq}), (\ref{SATP2_ineq_first}), (\ref{SATP2_ineq_second}). We estimate the value of the objective function
\begin {align*}
f_{c}(w^{*}) &= f_{c}(w) + \epsilon c^{2,1}_{i,j} + \epsilon c^{3,2}_{i,j} - \epsilon  c^{2,2}_{i,j} - \epsilon c^{3,1}_{i,j},\\
f_{c}(w^{*}) &= f_{c}(w) + \epsilon (c^{2,1}_{i,j} + c^{3,2}_{i,j} -  c^{2,2}_{i,j} - c^{3,1}_{i,j}) = f_{c}(w),
\end {align*}
by equation (\ref{obj_vector_IR_for_theorem}). Thus, we can assume that $z^{w}_{i,j} > 0$.

We change the rows in all blocks of the $j$-th column as it's shown in Table \ref {rows_rearrange_in_d_columns}.
\begin{table}[p]
\centering
	\begin{tabular}{||c|c||}
	\hhline {|t:==:t|}
	 $0$ & $0$ \\ \hhline {||--||}
	 $z^{w}_{i,j}$ & $t^{w}_{i,j}$ \\ \hhline {||--||}
	 $u^{w}_{i,j}$ & $v^{w}_{i,j}$ \\
	\hhline {|b:==b:|}
	\end{tabular}
	$\Rightarrow$
	\begin{tabular}{||c|c||}
	\hhline {|t:==:t|}
	 $x^{w^{*}} = z^{w}_{i,j}$ & $y^{w^{*}} = t^{w}_{i,j}$  \\ \hhline {||--||}
	 $z^{w^{*}} = u^{w}_{i,j}$ & $t^{w^{*}} = v^{w}_{i,j}$ \\ \hhline {||--||}
	 $0$ & $0$ \\
	\hhline {|b:==:b|}
	\end{tabular}
	\caption {Rearrange of the rows in the $j$-th column of the block matrix at Step \ref {step_3}}
	\label {rows_rearrange_in_d_columns}
\end {table}

Now in the $j$-th column we have $x^{w^{*}}_{i,j} > 0$ for all $i$. Without loss of generality, we assume that Step \ref {step_3} was applied to the first $d$ columns. Here comes the tricky part: we can't just rearrange rows in such way, as $w^{*}$ may not belong to the polytope $SATP^{2}_{LP}(m,n)$, or the objective function $f_{c}(w^{*})$ has a different value. Therefore, we simply rename the coordinates of the point $w$. Thus, for the first $d$ columns constraints (\ref{obj_vector_IR_for_theorem}) and inequalities (\ref{SATP2_ineq_first}),(\ref{SATP2_ineq_second}) are modified accordingly.

\item \label {step_4} We find an upper-left block $i,j$ with $x^{w}_{i,j} = 0$. As a result of the previous steps, $y^{w}_{i,j}$ is nonnegative, and if $z^{w}_{i,j} = 0$, then both $t^{w}_{i,j}$ and $u^{w}_{i,j}$ are nonnegative. Therefore, we can repeat the $\epsilon$-procedure from Step \ref {step_3} (Table \ref {block_ij_epsilon}) and achieve $z^{w^{*}}_{i,j} > 0$.

\item \label {step_5} The next step depends on the form of the $i$-th row.

\begin {enumerate}
\item If for all $l<j: y^{w}_{i,l}>0$, then after rearrange of the columns in the $i$-th row as at Step \ref {step_1} (Table \ref {columns_rearrange}) we get $x^{w}_{i,l} > 0$ for all $l \leq j$.

\item There exists some $l$ ($d < l < j$) that $y^{w}_{i,l} = 0$. Hence, $x^{w}_{i,l}$ is nonnegative, and if $t^{w}_{i,l} = 0$, then both $z^{w}_{i,j}$ and $v^{w}_{i,j}$ are nonnegative, and we can construct a point $w^{*}$ with $t^{w^{*}}_{i,l} > 0$ by the similar $\epsilon$-procedure. Thus, we assume that $t^{w}_{i,l}$ is nonnegative (Table \ref {fragment_w_matrix}).

\item There exists some $s$ ($s\leq d < j$) that $y^{w}_{i,s} = 0$. Since $s \leq d$, the coordinates in the $s$-th column were renamed at Step \ref {step_3}. Thus, if $t^{w}_{i,s} > 0$, then we can construct a point $w^{*}$ of $SATP^{2}_{LP}(m,n)$ with $y^{w^{*}}_{i,s} > 0$ by the $\epsilon$-procedure. Therefore, we assume that $t^{w}_{i,s} = 0$, and, due to that, $z^{w}_{i,s}$ and $v^{w}_{i,s}$ are nonnegative (Table \ref {fragment_w_matrix}).
\begin{table}[p]
	\centering
	\begin{tabular}{||c|c||c|c||c|c||}
	\hhline {|t:==:t:==:t:==:t|}
	$x_{i,s}$ & $0$ & $x_{i,l}$ & $0$ &  $0$ & $y_{i,j}$ \\ \hhline {||--||--||--||}
	$z_{i,s}$ & $0$ & - & $t_{i,l}$ & $z_{i,j}$ & - \\ \hhline {||--||--||--||}
	- & $v_{i,s}$ & - & - & - & - \\   \hhline {|:==::==::==:|}
	
	$x_{k,s}$ & - & $x_{k,l}$ & - &  $x_{k,j}$ & - \\ \hhline {||--||--||--||}
	- & - & - & - & $0$ & $t_{k,j}$ \\ \hhline {||--||--||--||}
	- & - & - & - & $0$ & - \\ 
  \hhline {|b:==:b:==:b:==:b|}
	\end{tabular}
	\caption {Fragment of the point $w$ block matrix}
	\label {fragment_w_matrix}
\end{table}
\end {enumerate}

\item \label {step_6} Now we examine the $j$-th column.
\begin {enumerate}

\item If for all $k$: $z^{w}_{k,j} > 0$, then we can rearrange the rows in $j$-th column as at Step \ref {step_3} (Table \ref {rows_rearrange_in_d_columns}) and achieve $x^{w^{*}}_{i,j} > 0$ for all $i$. Next, we rename the coordinates for the $j$-th column to become the $(d+1)$-th and increase the value of $d$ by one.

\item There exists some $k$ that $z^{w}_{k,j} = 0$. Then $t^{w}_{k,j}$ is nonnegative, since $z^{w}_{i,j} > 0$. We assume $u^{w}_{k,j} = 0$ (Table \ref {fragment_w_matrix}), otherwise by the $\epsilon$-procedure we can achieve $z^{w^{*}}_{k,j}$ being nonnegative. In this case we can't make $x^{w^{*}}_{i,j}$ nonnegative. 
Let's verify if such point $w$ belongs $SATP^{2}_{LP}(m,n)$ and check the inequality (\ref{SATP2_ineq_first}) for the blocks $i,j,k,l$:
\begin {align*}
(*) &= y_{i,j}+z_{i,j}+u_{i,j}+x_{i,l}+t_{i,l}+v_{i,l}+\\
&+x_{k,j}+t_{k,j}+v_{k,j}+x_{k,l}+t_{k,l}+v_{k,l} \leq 3,\\
&(y_{i,j} = x_{k,j}+y_{k,j},\ \ x_{k,j}+y_{k,j}+t_{k,j}+v_{k,j} = 1),\\
(*) &= 1+z_{i,j}+u_{i,j}+x_{i,l}+t_{i,l}+v_{i,l}+x_{k,j}+x_{k,l}+t_{k,l}+v_{k,l} \leq 3,\\
&(z_{i,j} + u_{i,j} = x_{i,l}+z_{i,l}+u_{i,j},\ \ x_{i,l}+z_{i,l}+t_{i,l}+u_{i,l}+v_{i,l} = 1),\\
(*) &= 2+x_{i,l}+x_{k,j}+x_{k,l}+t_{k,l}+v_{k,l} \leq 3,\\
&(x_{i,l} = x_{k,l}+y_{k,l},\ \ x_{k,j} = x_{k,l}+z_{k,l}+u_{k,l},\\
&x_{k,l}+y_{k,l}+z_{k,l}+t_{k,l}+u_{k,l}+v_{k,l} = 1),\\
(*) &= 3+2x_{k,l} \leq 3.
\end {align*}
By construction, for all $l<j$ we have $x^{w}_{k,l} > 0$, hence, the point $w$ with such blocks $i,j,k,l$ does not belong to the polytope $SATP^{2}_{LP}(m,n)$.

Now we check the inequality (\ref{SATP2_ineq_second}) for the blocks $i,j,k,s$. Note that $s \leq d$, and the $s$-th column was modified at Step \ref{step_3}. Therefore, the inequality has the form 
\begin{align*}
(**) &= y_{i,j}+z_{i,j}+u_{i,j}+x_{i,s}+z_{i,s}+v_{i,s}+\\
&+x_{k,j}+t_{k,j}+v_{k,j}+x_{k,s}+z_{k,s}+v_{k,s} \leq 3, \\ 
&(y_{i,j} = x_{k,j}+y_{k,j},\ \ x_{k,j}+y_{k,j}+t_{k,j}+v_{k,j}=1),\\
(**) &= 1+z_{i,j}+u_{i,j}+x_{i,s}+z_{i,s}+v_{i,s}+x_{k,j}+x_{k,s}+z_{k,s}+v_{k,s}\leq 3,\\
&(v_{i,s}=y_{i,j}+t_{i,j}+v_{i,j},\ \ y_{i,j}+z_{i,j}+t_{i,j}+u_{i,j}+v_{i,j}=1),\\
(**) &= 2+x_{i,s}+z_{i,s}+x_{k,j}+x_{k,s}+z_{k,s}+v_{k,s} \leq 3,\\
&(x_{i,s} = x_{k,s}+y_{k,s}, \ \ z_{i,s}=z_{k,s}+t_{k,s},\ \ x_{k,j}=x_{k,s}+z_{k,s}+u_{k,s},\\
&x_{k,s}+y_{k,s}+z_{k,s}+t_{k,s}+u_{k,s}+v_{k,s} = 1),\\
(**) &= 3+2(x_{k,s} + z_{k,s}) \leq 3.
\end{align*}
Since $x^{w}_{k,s} > 0$, point $w$ with such blocks $i,j,k,s$ does not belong to the polytope $SATP^{2}_{LP}(m,n)$.
\end {enumerate} 

Thereby, the combination of $5$ ($b$ or $c$) and $6$ ($b$) is impossible, and we can repeat the Steps \ref {step_4} - \ref {step_6}, until for all $i,j$ we have $x^{w^{*}}_{i,j} > 0$.
\end {enumerate}

Thus, for any $w$ that maximizes the objective function $f_{c}(x)$ over the polytope $SATP^{2}_{LP} (m,n)$ we can construct such point $w^{*} \in SATP^{2}_{LP} (m,n)$ that $x^{w^{*}}_{i,j} > 0$ for all $i,j$, up to renaming the coordinates, and $f_{c}(w) = f_{c}(w^{*})$.

The point $w^{*}$ can be decomposed into a convex combination
$$w^{*} = \alpha q + (1-\alpha) h,$$
where $0<\alpha \leq 1$, $q$ is an integral vertex of $SATP_{LP} (m,n)$ with $x^{q}_{i,j} = 1$ for all $i,j$, and $h$ has the following coordinates:
\begin {gather*}
x_{i,j}(h)=\frac{x_{i,j}(w^{*})-\alpha }{1-\alpha},\ \  y_{i,j}(h)=\frac{y_{i,j}(w^{*})}{1-\alpha}, \ \ z_{i,j}(h)=\frac{z_{i,j}(w^{*})}{1-\alpha},\\
t_{i,j}(h)=\frac{t_{i,j}(w^{*})}{1-\alpha},\ \ u_{i,j}(h)=\frac{u_{i,j}(w^{*})}{1-\alpha}, \ \ v_{i,j}(h)=\frac{v_{i,j}(w^{*})}{1-\alpha}.
\end {gather*}
The point $h$ satisfies the system (\ref {SATeqfirst})-(\ref {SATineq}), hence, both $q$ and $h$ belongs to $SATP_{LP} (m,n)$.

Our algorithm for integer recognition over $SATP_{LP} (m,n)$ polytope is similar to the one in Lemma \ref {lemma_cut_BQPLP}: if 
$$\max_{x\in SATP_{LP}(m,n)} f_{c}(x) > \max_{x\in SATP^{2}_{LP}(m,n)} f_{c}(x),$$
then, clearly, the maximum is not achieved at an integral vertex, since the polytopes $SATP_{LP} (m,n)$ and $SATP^{2}_{LP} (m,n)$ have the same set of integral vertices, and if
$$\max_{x\in SATP_{LP}(m,n)} f_{c}(x) = \max_{x\in SATP^{2}_{LP}(m,n)} f_{c}(x),$$
then for a point $w$ that maximizes the objective function we can construct such point $w^{*} \in SATP^{2}_{LP} (m,n)$ that
$$f_{c}(w) = f_{c}(w^{*}) = f_{c}(q),$$
where $q$ is an integral vertex, hence, 
$$\max_{x\in SATP_{LP}(m,n)} f_{c}(x) = \max_{z\in SATP(m,n)} f_{c}(z),$$
and the integer recognition problem has a positive answer.

The polytope $SATP^{2}_{LP} (m,n)$ has a polynomial number of additional constraints, therefore, LP over it is polynomially solvable, and the entire algorithm is polynomial. Note that the construction of $w^{*}$ and integral vertex $q$ also requires a polynomial time ($O(mn(m+n))$), as in the worst case for each block $i,j$ we have to check all the blocks in the row $i$ and column $j$.
\end {proof}


\section {Applications of integer recognition}

In the last section we construct a special polynomially solvable subproblem of some NP-complete problem to show how the constraints and the algorithm from Theorem \ref {integer_recognition_SATP_polynomial} may be used.

We consider a problem of \textit{$2$-$3$ edge constrained bipartite graph coloring} ($2$-$3$-ECBGC): for a given bipartite graph $G = (U,V,E)$ and a function of permitted color combinations for every edge 
$$pc: E \times \{1,2\} \times \{1,2,3\} \rightarrow \{+,-\},$$
it is required to determine if it's possible to assign the vertex colors in such way
$$color: U \rightarrow \{1,2\}\ \mbox {and}\ color: V \rightarrow \{1,2,3\},$$
that they satisfy the constraints of all the edges in the graph.

\begin {Theorem}
$2$-$3$-ECBGC problem is NP-complete.
\label {ECBGC_NPC}
\end {Theorem}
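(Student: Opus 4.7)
The plan is to prove NP-completeness by standard means: first show membership in NP, then reduce 3-SAT (or, equivalently, one of its variants such as NAE-3SAT or X3SAT already considered in the paper) to 2-3-ECBGC. Membership in NP is immediate: a proposed pair of coloring functions $color: U \to \{1,2\}$ and $color: V \to \{1,2,3\}$ serves as a polynomial-size certificate, and each edge constraint can be verified in constant time by a single lookup in $pc$.

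For the hardness direction I would reduce from 3-SAT. Given a formula with variables $u_1,\ldots,u_m$ and clauses $c_1,\ldots,c_n$, construct the complete bipartite graph $G=(U,V,E)$ with $U=\{1,\ldots,m\}$, $V=\{1,\ldots,n\}$, and $E=U\times V$. The intended meaning of a coloring is exactly the one encoded by the $\textbf{\emph{row}}$ and $\textbf{\emph{col}}$ vectors of Lemma~\ref{integral_vertices}: $color(i)\in\{1,2\}$ picks the truth value of variable $u_i$, and $color(j)\in\{1,2,3\}$ picks a designated ``satisfying position'' inside clause $c_j$. Accordingly, for every edge $(i,j)$ and every combination $(a,b)\in\{1,2\}\times\{1,2,3\}$, define
\begin{equation*}
pc((i,j),a,b)=\begin{cases} + & \text{if the $b$-th literal of $c_j$ is $u_i$ and $a=1$,}\\ + & \text{if the $b$-th literal of $c_j$ is $\overline{u}_i$ and $a=2$,}\\ + & \text{if $u_i$ does not appear at position $b$ of $c_j$,}\\ -, & \text{otherwise.}\end{cases}
\end{equation*}
This construction is clearly polynomial in the size of the 3-SAT instance.

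Correctness then amounts to two easy directions. If there is a satisfying assignment, pick any true literal in each clause $c_j$ and let $color(j)$ be its position; set $color(i)$ to $1$ or $2$ according to the truth value of $u_i$. By construction every edge constraint is met: the only forbidden combinations are those asserting that position $b$ of clause $c_j$ holds a literal in $u_i$ whose polarity disagrees with the chosen value of $u_i$, and such combinations never arise because the position we picked is actually satisfied. Conversely, given a valid coloring, define a truth assignment from $color$ restricted to $U$; for each $j$, the edge between $u_i$ and $c_j$ (where $u_i$ is the variable at position $color(j)$ in $c_j$) rules out the disagreeing polarity, so the literal at position $color(j)$ is true and clause $c_j$ is satisfied.

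The only mildly delicate point, and what I would highlight, is making sure the ``unrelated variable'' case $pc=+$ is handled correctly so that no spurious constraints are introduced between a variable and a clause that does not contain it; this is why only the positions where $u_i$ actually occurs in $c_j$ generate forbidden pairs. No new ideas beyond the reduction itself are required, and the argument mirrors the structural correspondence between 3-SAT and the $SATP(m,n)$ polytope exploited in Section~\ref{SAT_polytope_section}.
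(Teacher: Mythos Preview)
Your proof is correct, and the structure (NP membership plus a polynomial reduction encoded through the $\textbf{\emph{row}}/\textbf{\emph{col}}$ correspondence of Lemma~\ref{integral_vertices}) matches the paper's. The details differ in two ways worth noting. First, the paper reduces from X3SAT rather than from plain 3-SAT, and it defines $pc$ indirectly via the objective vector $w$ of Theorem~\ref{reduction_X3SAT}: $pc(i,j,k,s)=+$ exactly when $w^{k,s}_{i,j}=1$. This keeps the hardness argument tied to the polytope machinery already developed, and lets the paper invoke the equality $f_{w}(z)=3n$ instead of rechecking the two directions by hand. Second, the paper only puts an edge $(i,j)$ into $G$ when $u_i$ actually occurs in $c_j$ (giving exactly $3n$ edges), whereas you take the full complete bipartite graph and render the ``irrelevant'' edges harmless by allowing all six colour pairs. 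Your choice is slightly heavier but avoids the implicit assumption that each clause contains three distinct variables; the paper's choice yields a sparser instance and reuses Theorem~\ref{reduction_X3SAT} verbatim. Either route is a clean reduction; yours is a bit more self-contained, the paper's is more in keeping with the surrounding $SATP$ framework.
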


\begin {proof}
The problem obviously belongs to the class NP, as solution can be verified in $O(|E|)$ time. 

We transform exactly-1 3-satisfiability problem to $2$-$3$-ECBGC. Let $m$ be the number of variables and $n$ the number of clauses. First, we construct an instance of integer recognition over $SATP_{LP}(m,n)$ with an objective vector $w \in \R^{6mn}$ as shown in Theorem \ref {reduction_X3SAT}. Then we create a bipartite graph $G_{w}$ with $m$ vertices in $U$ and $n$ vertices in $V$. Graph $G_{w}$ has an edge $(i,j)$ if and only if clause $c_{j}$ has literal $u_{i}$ or $\bar{u}_{i}$. The permitted color combinations are defined as follows:
\begin {gather*}
pc(i,j,k,s) = \left\{
\begin {array} {l}
+,\ \mbox{if} \ w^{k,s}_{i,j} = 1, \\
-,\ \mbox {otherwise}.
\end {array}
\right.
\end {gather*}

There is a bijection between possible color assignments and integral vertices of $SATP(m,n)$:
\begin {gather*}
\forall i \in U: color(i) = \textbf{\emph {row}}_{i} (z) + 1,\\
\forall j \in V: color(j) = \textbf{\emph {col}}_{j} (z) + 1.
\end {gather*}

By Theorem \ref {reduction_X3SAT}, a truth assignment for X3SAT exists if and only if there exists such an integral vertex $z$ of $SATP(m,n)$ that $f_{w}(z) = 3n$. Since some color assignment satisfies the permitted color constraints of the edge $i,j$ if and only if $f_{w}(z_{i,j}) = 1$, and there are exactly $3n$ edges in the graph $G_{w}$, we have $$\mbox{X3SAT} \leq_{p} \mbox{2-3-ECBGC}.$$
\end {proof}

Using Theorem \ref {integer_recognition_SATP_polynomial}, we construct a special polynomially solvable subproblem of $2$-$3$-edge constrained bipartite graph coloring.

\begin {Theorem}
$2$-$3$-ECBGC problem is polynomially solvable if the function of permitted color combinations satisfies the following constraints
\begin {gather}
\nonumber
\forall j\in V, \ \exists a_{j},b_{j} \in \{1,2,3\}\ (a \neq b), \forall i\in U:\\ 
pc (i,j,a_{j},1) = pc (i,j,b_{j},2) = ``+" \ \Leftrightarrow \ pc (i,j,a_{j},2) = pc (i,j,b_{j},1) = ``+".
\label {permitted_colors}
\end {gather}
\label{ECBGC_polynomial}
\end {Theorem}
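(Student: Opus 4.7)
My plan is to reduce 2-3-ECBGC under condition (\ref{permitted_colors}) to integer recognition over $SATP_{LP}(m,n)$ with $m = |U|$, $n = |V|$, and then invoke Theorem \ref{integer_recognition_SATP_polynomial}. Mirroring the reduction used in Theorem \ref{ECBGC_NPC}, I would encode the permitted colour function into an objective vector $c \in \R^{6mn}$; the challenge is to arrange the entries so that $c$ satisfies (\ref{obj_vector_IR}) while the per-block maximum of $c^{k,s}_{i,j}$ equals $1$ and is attained only at permitted colour pairs.

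For each column $j$, fix $a_j, b_j$ from (\ref{permitted_colors}) and let $d_j$ denote the remaining colour. For non-edges $(i,j)\notin E$ set $c_{i,j}=0$; for edges $(i,j)\in E$ set $c^{d_j,s}_{i,j}=1$ when $pc(i,j,d_j,s)=``+"$ and $0$ otherwise, which is harmless since the $d_j$-coordinates do not appear in the identity of (\ref{obj_vector_IR}). For the four coordinates indexed by $\{a_j, b_j\}\times\{1,2\}$, examine the pattern $(P,Q,R,S):=(pc(i,j,a_j,1),pc(i,j,a_j,2),pc(i,j,b_j,1),pc(i,j,b_j,2))$. Condition (\ref{permitted_colors}) is equivalent to the Boolean identity $P \wedge S = Q \wedge R$, which leaves $10$ admissible patterns out of $16$. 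For the six patterns with $0$, $2$, or $4$ pluses, take the $0/1$ indicator $c^{k,s}_{i,j}=1$ iff $pc(i,j,k,s)=``+"$, which directly satisfies the sum identity. For each of the four single-plus patterns, assign value $1$ to the plus position, value $\tfrac{1}{2}$ to the two positions on the opposite side of the identity $c^{a_j,1}_{i,j}+c^{b_j,2}_{i,j}=c^{a_j,2}_{i,j}+c^{b_j,1}_{i,j}$, and value $0$ to the fourth position. A case check confirms (\ref{obj_vector_IR}) and shows $\max_{k,s} c^{k,s}_{i,j} = 1$, attained exactly at the permitted pairs among the six coordinates of the block.

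Given this $c$, the constraint $\sum_{k,l}x^{k,l}_{i,j}=1$ from (\ref{SATeqfirst}) bounds each block contribution by $\max_{k,s}c^{k,s}_{i,j}\leq 1$, so $f_c(x)\le|E|$ for every $x\in SATP_{LP}(m,n)$. For any integral vertex $z$, block $(i,j)$ contributes $c^{k,s}_{i,j}$ at the selected pair $(k,s)=(\textbf{col}_j(z)+1,\textbf{row}_i(z)+1)$, and this value equals $1$ iff that pair is permitted by $pc$. Therefore $\max_{z\in SATP(m,n)}f_c(z)=|E|$ if and only if the 2-3-ECBGC instance admits a valid colouring. The decision algorithm then computes $M=\max_{x\in SATP_{LP}(m,n)}f_c(x)$ by linear programming, returns NO if $M<|E|$, and otherwise invokes the polynomial-time integer recognition procedure from Theorem \ref{integer_recognition_SATP_polynomial}; the answer is YES iff $M=|E|$ and integer recognition returns YES.

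The main obstacle is the construction of $c$ for the four single-plus patterns, where a naive $0/1$ indicator would break (\ref{obj_vector_IR}); the $\tfrac{1}{2}$ padding balances the sum identity while keeping the block maximum at $1$ and attained only at permitted pairs. The pattern enumeration, the per-block verification that $\max=1$ is attained exclusively at pluses, and the polynomiality bookkeeping are routine once this construction is in place.
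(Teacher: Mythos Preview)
Your proposal is correct and follows essentially the same route as the paper: reduce the constrained $2$-$3$-ECBGC instance to integer recognition over $SATP_{LP}(m,n)$ by building an objective vector $c$ satisfying~(\ref{obj_vector_IR}), then invoke Theorem~\ref{integer_recognition_SATP_polynomial}. The only difference is how the four single-plus patterns are handled: the paper uses ``zero balancing'' by placing a $-1$ at the partner of the plus on the same side of the identity (so that both sides sum to $0$), whereas you pad the two positions on the opposite side with~$\tfrac{1}{2}$ (so that both sides sum to~$1$); both choices satisfy~(\ref{obj_vector_IR}), keep the block maximum equal to~$1$, and attain it exactly at the permitted pairs, so the subsequent argument is identical.
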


\begin {proof}
Let $|U| = m$ and $|V| = n$. We reduce $2$-$3$-ECBGC problem to integer recognition over $SATP_{LP}(m,n)$ by constructing an objective vector $c \in \R^{6mn}$ from the function of permitted color combinations as follows:
\begin {gather*}
c^{k,s}_{i,j} = \left\{
\begin {array} {l}
1,\ \mbox{if} \ pc(i,j,k,s) = ``+", \\
-1 ,\ \mbox {in the case of \textit{zero balancing}},\\
0 ,\ \mbox {otherwise}.
\end {array}
\right.
\end {gather*}

We have a \textit{zero balancing} case if an edge $i,j$ out of four color combinations $(a_{j},1)$, $(a_{j},2)$, $(b_{j},1)$, and $(b_{j},2)$ has only one permitted. Assume, without loss of generality, that it is $(a_{j},1)$, then we assign $c^{b_{j},2}_{i,j} = -1$ to achieve zero balance:
$$c^{a_{j},1}_{i,j} + c^{b_{j},2}_{i,j} = c^{a_{j},2}_{i,j} + c^{b_{j},1}_{i,j} = 0.$$
An example of an objective vector $c$ for $a_{1} = a_{2} = 1$ and $b_{1} = b_{2} = 2$ is shown in Table \ref {colors_to_vector_example}.
\begin{table}[h]
\centering
\begin{tabular}{||c|c||c|c||}
\hhline {|t:==:t:==:t|}
$+$ & $-$ & $+$ & $-$ \\ \hhline {||--||--||}
$+$ & $-$ & $-$ & $-$ \\ \hhline {||--||--||}
$-$ & $+$ & $+$ & $+$ \\ 
\hhline {|:==::==:|}
$-$ & $-$ & $+$ & $+$ \\ \hhline {||--||--||}
$-$ & $+$ & $-$ & $-$ \\ \hhline {||--||--||}
$+$ & $-$ & $-$ & $+$ \\ 
\hhline {|b:==:b:==:b|}
\end{tabular}
$\Rightarrow$
\begin{tabular}{||c|c||c|c||}
\hhline {|t:==:t:==:t|}
$1$ & $\ 0 \ $ & $\ 1\ $ & $0$ \\ \hhline {||--||--||}
$1$ & $0$ & $0$ & $-1$ \\ \hhline {||--||--||}
$0$ & $1$ & $1$ & $1$ \\ 
\hhline {|:==::==:|}
$-1$ & $0$ & $1$ & $1$ \\ \hhline {||--||--||}
$0$ & $1$ & $0$ & $0$ \\ \hhline {||--||--||}
$1$ & $0$ & $0$ & $1$ \\ 
\hhline {|b:==:b:==:b|}
\end{tabular}
\caption {Example of an objective vector $c$}
\label {colors_to_vector_example}
\end{table}

For every edge $i,j$ there are $4$ possible color combinations that include the colors $a_{j}$ and $b_{j}$. There are $16$ possible constraints on these color combinations. Six of them are forbidden by (\ref{permitted_colors}). Others transform into a block of vector $c$ of the form (\ref{obj_vector_IR}). We again use the bijection between integral vertices and possible color assignments as in Theorem \ref {ECBGC_NPC}. 

Thus, a permitted color assignment for $2$-$3$-ECBGC exists if and only if
$$\max_{x\in SATP_{LP}(m,n)} f_{c}(x) = \max_{z\in SATP(m,n)} f_{c}(z) = |E|.$$
Integer recognition over $SATP_{LP}(m,n)$ with the objective function $f_{c}(x)$ is polynomially solvable (Theorem \ref {integer_recognition_SATP_polynomial}), therefore, such instance of $2$-$3$-ECBGC problem is polynomially solvable as well.
\end {proof}

Note that the constrained objective function (\ref{obj_vector_IR}) is far more flexible than we used for $2$-$3$-ECBGC problem, since it is not limited to the $\{-1,0,1\}$ values. For example, we can add the weight for permitted color combinations that will satisfy the constraints (\ref{obj_vector_IR}) and solve the problem by integer recognition algorithm.


\section{Conclusions}

We have considered $SATP(m,n)$ polytope and its LP relaxation $SATP_{LP}(m,n)$. This polytope is a simple extension of the well-known and important Boolean quadric polytope $BQP(n)$, constructed by adding two additional coordinates per block. Polytope $SATP(m,n)$ is the object of our interest, since several special instances of 3-SAT like X3SAT and NAE-3SAT are reduced to integer programming and integer recognition over it.

We have compared key properties of the Boolean quadric polytope and 3-SAT polytope. Like the $BQP(n)$, polytope $SATP(m,n)$ has an exponential extension complexity, LP relaxation $SATP_{LP}(m,n)$ is quasi-integral with respect to $SATP(m,n)$, 1-skeleton of $SATP(m,n)$ is not a complete graph, but is a very dense one, with the diameter equals $2$, and the clique number being superpolynomial in dimension. Unlike the $BQP(n)$, denominators of the fractional vertices of $SATP_{LP}(m,n)$ relaxation can take any positive integral values, and integer recognition over $SATP_{LP}(m,n)$ is NP-complete.

Finally, we have considered possible constraints on the objective function for which integer recognition over $SATP_{LP}(m,n)$ is polynomially solvable. We have introduced a problem of $2$-$3$ edge constrained bipartite graph coloring that is NP-complete in general case, and design a polynomial time algorithm for its special subproblem, based on $SATP_{LP}(m,n)$ properties. This example shows how the polytope $SATP(m,n)$ may be used, and why it is of interest for further studying.


\bibliographystyle{amsplain}

\end{document}